\newcommand{\R}{\mathbb{R}}
\newcommand{\C}{\mathbb{C}}
\newcommand{\N}{\mathbb{N}}
\newcommand{\eps}{\varepsilon}
\DeclareMathOperator{\Rp}{Re\,}
\begin{document}
\mainmatter              % start of a contribution
\title{Fourth-order operators with unbounded coefficients in $L^1$ spaces}
\titlerunning{Fourth-order operators with unbounded coefficients in $L^1(\R^N)$}  % abbreviated title (for running head)
%                                     also used for the TOC unless
%                                     \toctitle is used
%
\author{Federica Gregorio\inst{1} \and Chiara Spina\inst{2}
  \and Cristian Tacelli\inst{1}}
\authorrunning{F. Gregorio, C. Spina, C. Tacelli} % abbreviated author list (for running head)
\institute{Dipartimento di Matematica, Università degli Studi di Salerno, Fisciano, Italy\\
\email{fgregorio@unisa.it},\\\email{ctacelli@unisa.it}
\and
Dipartimento di Matematica e Fisica "Ennio De Giorgi", Università del Salento, Lecce, Italy\\\email{chiara.spina@unisalento.it}}

\maketitle              % typeset the title of the contribution

\begin{abstract}
We prove that  operators of the form $A=-a(x)^2\Delta^{2}$, with  suitable growth conditions on the coefficient $a(x)$,  generate  analytic semigroups in $L^1(\R^N)$. In particular, we deduce generation results for the operator $A :=- (1+|x|^2)^{\alpha} \Delta^{2}$, $0\leq\alpha\leq2$.  Moreover, we characterise the maximal domain of $A$ in $L^1(\R^N)$. 
%\subjclass{35K35, 35B45, 47D06}
\keywords{Higher order elliptic and parabolic equations, analytic semigroups, a priori estimates, unbounded coefficients, domain characterization.}
\end{abstract}

\section{Introduction}

In this paper we are interested in elliptic and parabolic solvability in $L^1(\R^N)$   of the problems associated with  fourth order operators
of the form $$A:=-a(x)^2\Delta^{2}$$ with suitable growth conditions on the coefficient $a(x)$. In particular, the results proved in the paper apply to the model operator $A =- (1+|x|^2)^{\alpha} \Delta^{2}$
in the case $0\leq\alpha\leq2$.
In the language of semigroups, we prove that $A$, endowed  with the domain
$$D(A_1)=\{u\in L^1(\R^N):\ a^\frac{1}{2}D u,\ a D^2u,\ a^\frac{3}{2}D^3 u,\ a^2\Delta^2 u\in L^1(\R^N)\},$$
generates an analytic semigroup.

Although second order elliptic operators with unbounded coefficients and, in particular, polynomially growing coefficients, have been widely studied, see for example \cite{for-lor,met-spi2,met-oka-sob-spi,bcgt}, where unbounded coefficients similar to ours have been introduced,  it seems that the case of higher order operators has been less addressed.
 
The semigroup generated by a class of higher order elliptic operators with  bounded measurable coefficients in $L^p(\R^N)$ has been systematically studied by Davies, cf. \cite{dav95a}. Unbounded coefficients for higher order operators have been considered in recent times. In \cite{AGRT} the authors obtained generation results for the square of the Kolmogorov operator $L=\Delta+\frac{\nabla\mu}{\mu}\cdot\nabla$ in the weighted space $L^2(\R^N,d\mu)$. 
In \cite{gre-tac} polynomially growing coefficients are considered. More precisely, the operator $A=(1+|x|^\alpha)^2\Delta^2+|x|^{2\beta}$ with $\alpha>0$ and $\beta>(\alpha-2)^+$ has been studied in the space $L^2(\R^N)$ for $N>4$.

The present paper extends the results proved in  \cite{gre-spi-tac1} where we studied the operator $A$ in $C_b(\R^N)$ and $L^p(\R^N)$ spaces for $1<p\leq\infty$. By proving the sectoriality of the operator $A$, we obtained generation of an analytic semigroup   and characterised the maximal domain of such operators in $L^p(\R^N)$ for $1<p<\infty$.

Here we prove generation results also in $L^1(\R^N)$ by proving that the operator $(A,D(A_1))$ is sectorial. 
To get the main result, we first prove the resolvent estimate in $L^1$   in a suitable sector. In order to do this we use a duality argument which is based upon the  generation result in continuous function spaces for the adjoint operator 
$$-\hat Au =a^2\Delta ^2u+4D a^2D \Delta u+2\Delta a^2\Delta u+4tr D^2a^2D^2u+4D \Delta a^2D u+\Delta ^2a^2 u.$$
Since the latter operator contains also explicit lower order terms with unbounded coefficients, we need a generalization of the above mentioned results in $L^p(\R^N)$, $1<p\leq\infty$, and then in $C_b(\R^N)$ for operators of this form as a preliminary result. For this reason  Section \ref{lower} is devoted to the study of  more general operators with lower order terms via perturbation methods.  Once the resolvent estimate is known, an approximation argument with operators with smooth and bounded coefficients allows us to prove that a suitable sector is contained in the resolvent set.

We have a precise description of the domain, indeed we prove that the maximal domain of $A$ in $L^1(\R^N)$ coincides with $D(A_1)$.

\medskip

 \textbf{Notations.} We use standard notations for function spaces. We denote by $L^p(\R^N)$ and $W^{k,p}(\R^N)$ the standard Lebesgue and Sobolev spaces, respectively. $C_c^\infty(\R^N)$ is the space of test functions and $C_b(\R^N)$ is the space of bounded continuous functions. We denote by $W_{\rm loc}^{3,p}(\R^{N})$   the space of functions in $W^{3,p}(\Omega)$ for every compact subset $\Omega$ of $\R^N$.
$B_R$ denotes the open ball of radius $R$ centred at 0, whereas $B_R(x_0)$ is the ball centred at $x_0$ for some $x_0\in\R^N$.  We denote by $B_R^c$ the complementary set of the open ball, $B_R^c=\R^N\setminus B_R$. Moreover, we use the symbol $\|\cdot\|_{p,R}$ to shorten the notation $\|\cdot\|_{L^p(B_R(x_0))}$. In the proofs, $C$ is a  positive constant that may vary from line to line. Finally, for every function $u$ smooth enough we set
\begin{align*}|D^4u|=\left(\sum_{i,j,k,l=1}^N|D_{ijkl}u|^2\right)^\frac12&, \ |D^3u|=\left(\sum_{i,j,k=1}^N|D_{ijk}u|^2\right)^\frac12\\|D^2u|=\left(\sum_{i,j=1}^N|D_{ij}u|^2\right)^\frac12&, \ |Du|=\left(\sum_{i=1}^N|D_{i}u|^2\right)^\frac12.\end{align*}
We say that an operator $A$ is sectorial operator if there exist $\omega\in\R,\ \theta\in\left(\frac\pi2,\pi\right),C>0$ such that $\rho(A)\supset\Sigma=\{\lambda\in\C:\lambda\neq\omega,|\textrm{arg}(\lambda-\omega)|<\theta\}$ and the resolvent estimate $\|R(\lambda,A)\|\leq\frac{C}{|\lambda-\omega|}$ holds for every $\lambda\in\Sigma$.

\section{Generation results  for more general operators}\label{lower}
In this section we study generation results in $C_b $ and in $L^p$ spaces, $1<p\leq\infty$, for  operators with lower order terms of the form
\[
-Lu=a^2\Delta^2u+\sum_{i,j,k=1}^Nb_{ijk}D_{ijk}u+\sum_{i,j=1}^Nc_{ij}D_{ij}u+\sum_{i=1}^Nd_iD_iu+eu
\]
where the lower order coefficients satisfy the following growth assumptions 
\begin{alignat*}{2} \label{gradient-p}
0<a \in C^1(\R^N),\quad
 &|D a(x)|\leq \kappa a(x)^\frac{1}{2},\\
 b_{ijk}\in C(\R^N),\quad &|b_{ijk}(x)|\leq \kappa  a^\frac{3}{2}(x) \quad &&\forall\,i,j,k=1,\dots, N,\\
\tag{$H_p$}  c_{ij}\in C(\R^N),\quad &|c_{ij}(x)|\leq \kappa^2 a(x) &&\forall\,i,j=1,\dots, N,\\
  d_{i}\in C(\R^N),\quad &|d_{i}(x)|\leq \kappa^3 a^\frac{1}{2}(x) &&\forall\,i=1,\dots, N,\\
  e\in C(\R^N),\quad &|e(x)|\leq \kappa ^4,
\end{alignat*}
for all $x\in \R^N$ and some positive constant $\kappa$. We endow $L$  with the following domain
$$D(A_p)=\{u\in W_{\rm loc}^{4,p}(\R^N)\cap L^p(\R^N):\ a^\frac{1}{2}D u,\ a D^2u,\ a^\frac{3}{2}D^3 u,\ a^2D^4 u\in L^p(\R^N)\}.$$

We write the operator $L$ as a sum operator $$L=A+B$$ where $A=-a^2\Delta^2$ is the principal part of $L$. The operator $A$ has been studied in \cite{gre-spi-tac1}. In particular, the following generation result holds for $(A,D(A_p))$ in $L^p(\R^N), 1<p<\infty$. 

\begin{theorem}
Let  $1<p<\infty$ and assume that $a$ satisfies (\ref{gradient-p}). Then, $(A,D(A_p))$ generates an analytic semigroup in $L^p(\R^N)$.
\end{theorem}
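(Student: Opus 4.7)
The plan is to prove sectoriality of $(A, D(A_p))$ in a sector $\Sigma_\theta$ with $\theta > \pi/2$, from which generation of an analytic $C_0$-semigroup follows by the standard characterization. This splits naturally into (i) a uniform a priori resolvent estimate $|\lambda|\,\|u\|_p \leq C\,\|\lambda u - Au\|_p$ on $\Sigma_\theta$, and (ii) surjectivity of $\lambda I - A$ on the same sector.

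For the estimate, I would first establish the intermediate weighted bounds
\begin{equation*}
\|a^{1/2}Du\|_p + \|aD^2u\|_p + \|a^{3/2}D^3 u\|_p \leq C\bigl(\|u\|_p + \|a^2\Delta^2 u\|_p\bigr), \qquad u \in D(A_p),
\end{equation*}
via repeated integration by parts, using $|Da| \leq \kappa a^{1/2}$ to absorb the commutator terms arising when a derivative falls on the weight. The resolvent estimate itself would come from testing $\lambda u + a^2\Delta^2 u = f$ against the regularized duality map $(|u|^2+\eps)^{(p-2)/2}\bar u$, integrating by parts four times, and sending $\eps \to 0$: the leading positive contribution is of the form $\int a^2|\Delta u|^2|u|^{p-2}\,dx$, while all cross terms produced when derivatives fall on $a^2$ or on $|u|^{p-2}\bar u$ are absorbed by the growth assumption, yielding the bound uniformly for $\lambda \in \Sigma_\theta$. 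Surjectivity would then follow by approximation: pick smooth $a_n$ with $0 < c_n \leq a_n \leq C_n$, $|Da_n| \leq \kappa a_n^{1/2}$ uniformly in $n$, and $a_n \to a$ locally; each $-a_n^2\Delta^2$ has bounded elliptic coefficients and generates an analytic semigroup on $L^p(\R^N)$ with domain $W^{4,p}(\R^N)$, and the uniform resolvent and weighted bounds allow one to extract a limit $u_n := (\lambda - A_n)^{-1}f \to u \in D(A_p)$ satisfying $(\lambda - A)u = f$.

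The principal obstacle is closing the weighted a priori estimate with only $C^1$ regularity on $a$: one cannot differentiate the weight more than once, so every integration by parts that would produce $D^2 a$ must be avoided or rewritten, forcing the derivatives to be distributed symmetrically between $u$ and $a$. The pointwise identity $|Da|^2 \leq \kappa^2 a$ is exactly tight enough that each of the successive weighted derivative terms scales correctly to be absorbed on the left-hand side; a weaker bound on $Da$ would break the chain of absorptions and the sectoriality argument would collapse.
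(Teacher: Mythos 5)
The paper does not re-prove this theorem; it cites \cite{gre-spi-tac1}, and the route there is a freezing-of-coefficients argument: one works on balls $B_{\rho(x)}(x)$ of variable radius $\rho(x)\sim a(x)^{1/2}$, on which $a$ is comparable to the constant $a(x_0)$, applies the constant-coefficient biharmonic resolvent and Calder\'on--Zygmund estimates on each ball, and glues via the Cupini--Fornaro covering lemma (this is precisely the scheme reproduced in Lemmas \ref{interp-bila}--\ref{interpolation-1} of the present paper in the $L^1$ setting, and the cited Lemma 2.4 of \cite{gre-spi-tac1} used in Lemma \ref{interpolation}). Your proposal is a genuinely different route, and it has a gap that is specific to the fourth order.

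The problematic step is the resolvent estimate obtained by pairing $\lambda u + a^2\Delta^2 u = f$ with the regularized duality map $(|u|^2+\varepsilon)^{(p-2)/2}\bar u$. For a second-order operator this works because, after one integration by parts, the leading quadratic form $\Rp\int |Du|^2|u|^{p-2}+\,\dots$ is nonnegative for all $1<p<\infty$. For $\Delta^2$ that is no longer true: after two integrations by parts one does not get $\int a^2|\Delta u|^2|u|^{p-2}$ plus absorbable remainders, but a full quadratic form in $\Delta u$, $Du\otimes Du$, $|Du|^2$ with $p$-dependent coefficients, and this form is \emph{not} sign-definite for all $p\in(1,\infty)$. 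It is known that the biharmonic semigroup is $L^p$-contractive (equivalently, the form is dissipative) only for $p$ in a bounded neighbourhood of $2$; outside that range the semigroup still exists and is analytic, but not by a positivity/dissipativity argument. So the step ``the leading positive contribution is $\int a^2|\Delta u|^2|u|^{p-2}$, and all cross terms are absorbed'' fails for general $p$, and the claimed uniform sectoriality estimate does not follow by this route. This is not a presentation issue; it is the reason the paper (and its predecessor) localizes and transfers the constant-coefficient Agmon-type estimate rather than integrating by parts globally.

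Two smaller remarks. First, the $C^1$-regularity obstruction you flag is real for the direct approach but is entirely sidestepped by the covering argument: one never differentiates $a$ more than once because $a$ is simply frozen on each ball, and $|Da|\le\kappa a^{1/2}$ is used only to guarantee comparability of $a$ on $B_{2\rho(x_0)}(x_0)$ and the Lipschitz bound on $\rho$. Second, your surjectivity-by-approximation step is sound and essentially the one used in the paper's $L^1$ section (with $a_\sigma = a/(1+\sigma a)$), \emph{provided} the uniform resolvent estimate for $A_\sigma$ is already in hand; so the whole weight of the argument rests on the sectoriality estimate, which is exactly where the proposal breaks.
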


The generation of analytic semigroups was showed by proving the sectoriality of $(A,D(A_p))$ in a  sector  depending only on $N,p$ and $\kappa$.

Having in mind to extend the result to the operator $L$, we recall   the following density result  (see \cite[Lemma 2.2]{gre-spi-tac1} for the proof). 
\begin{lemma} \label{density}
Let  $1<p<\infty$ and assume that $a$ satisfies (\ref{gradient-p}). Then $C_c^\infty(\R^N)$ is dense in $D(A_p)$ with respect to the norm
$$\|u\|_{D(A_p)}=\|u\|_p+\|a^\frac{1}{2}D u\|_p+\|a D^2u\|_p+\|a^\frac{3}{2}D^3 u\|_p+\|a^2D^4 u\|_p.$$
\end{lemma}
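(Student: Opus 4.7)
The plan is to approximate a general $u\in D(A_p)$ by functions in $C_c^\infty(\R^N)$ in two standard stages: first truncate by a family of cutoffs $\eta_n$, then mollify the (now compactly supported) truncation. The growth assumption $|Da|\leq\kappa a^{1/2}$ in (\ref{gradient-p}) plays the crucial role; differentiating the inequality one finds that $a^{1/2}$ is globally Lipschitz with $|D(a^{1/2})|=\frac{|Da|}{2a^{1/2}}\leq\frac{\kappa}{2}$, so that $a(x)^{k/2}\leq C(1+|x|)^k$ for $k=1,2,3,4$. This is exactly what is needed to kill the commutator terms produced by the cutoff.

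For the truncation, I would fix $\eta\in C_c^\infty(\R^N)$ with $\eta\equiv 1$ on $B_1$ and $\supp\eta\subset B_2$, and set $\eta_n(x)=\eta(x/n)$, so that $|D^j\eta_n|\leq C_j n^{-j}$ with support in $\{n\leq|x|\leq 2n\}$. Writing, for $0\leq k\leq 4$,
\[
a^{k/2} D^k(\eta_n u - u)= a^{k/2}(\eta_n-1)D^k u+\sum_{j=1}^k \binom{k}{j}\, a^{(k-j)/2}D^{k-j}u\cdot a^{j/2}D^j\eta_n ,
\]
the first term vanishes in $L^p(\R^N)$ by dominated convergence since $a^{k/2}D^k u\in L^p(\R^N)$ and $\eta_n-1\to 0$ pointwise with $|\eta_n-1|\leq 1$. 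For the commutator terms, on $\supp D^j\eta_n$ the Lipschitz bound gives $a^{j/2}\leq C(1+2n)^j\leq C'n^j$, hence $|a^{j/2}D^j\eta_n|\leq C''$ uniformly in $n$. Since the support escapes to infinity and $a^{(k-j)/2}D^{k-j}u\in L^p(\R^N)$ (which for $j\geq 1$ involves only derivatives of order $\leq 3$, covered by the definition of $D(A_p)$), the integral $\int_{n\leq|x|\leq 2n}|a^{(k-j)/2}D^{k-j}u|^p\,dx$ tends to $0$. Summing over $k=0,\dots,4$ yields $\eta_n u\to u$ in the graph norm.

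For the second stage, each $\eta_n u$ belongs to $W^{4,p}(\R^N)$ and has support in $\overline{B_{2n}}$, so $u_{n,\eps}:=(\eta_n u)*\rho_\eps\in C_c^\infty(\R^N)$ for a standard mollifier $\rho_\eps$. On the bounded set $\overline{B_{2n+1}}$ containing the supports of $u_{n,\eps}$ (for $\eps\leq 1$) and $\eta_n u$, the continuous function $a$ is bounded, so $a^{k/2}$ is a bounded multiplier there; combined with the classical fact that $D^k u_{n,\eps}\to D^k(\eta_n u)$ in $L^p(\R^N)$ as $\eps\to 0$, one gets convergence in the weighted norm. A diagonal selection then produces a sequence in $C_c^\infty(\R^N)$ converging to $u$ in $D(A_p)$.

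The main obstacle is the truncation step: the cross terms $a^{k/2}D^j\eta_n\, D^{k-j}u$ are genuine commutators that must be controlled. Without the hypothesis $|Da|\leq\kappa a^{1/2}$ one could not ensure that the weight $a^{j/2}$ is compensated by the $n^{-j}$ decay of the derivatives of the cutoff, and the argument would break down. Once this scaling is matched by the assumption on $a$, the remainder of the proof reduces to standard dominated convergence and mollification.
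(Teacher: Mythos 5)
Your proof is correct and uses the standard truncation-and-mollification strategy; this is essentially the argument the paper points to by deferring to \cite[Lemma 2.2]{gre-spi-tac1}, where the same cutoff mechanism is at work. The decisive observation — that $|Da|\le \kappa a^{1/2}$ makes $a^{1/2}$ globally Lipschitz, hence $a(x)^{j/2}\le C(1+|x|)^j$, so that $a^{j/2}D^j\eta_n$ is bounded uniformly in $n$ and the commutator terms vanish as the annuli $\{n\le|x|\le 2n\}$ escape to infinity — is exactly the scaling the hypothesis $(H_p)$ is designed to provide, and the remaining mollification step is handled correctly by exploiting boundedness of $a$ on compact sets.
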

By arguing as in \cite[Lemma 2.4]{gre-spi-tac1} we prove the following interpolation estimates. The main novelty is in the last inequality 
(\ref{eq:calderon-zigmund}) where the right hand side contains the whole operator $L$. 

\begin{lemma}  \label{interpolation}
Let  $1<p<\infty$ and assume that $a,b,c,d,e$ satisfy (\ref{gradient-p}). Then, there exists a positive constant  $C=C(N,p,\kappa)$ such that for every $\varepsilon>0$ the following inequalities hold
\begin{align}
\|a^{\frac{1}{2}}Du\|_{p}&\leq \varepsilon ^{3}\|a^{2}D^{4}u\|_{p}+\frac{C}{\varepsilon}\|u\|_{p} \label{eq:interp00-01}\\
\|aD^{2}u\|_{p}&\leq \varepsilon^2 \|a^{2}D^{4}u\|_{p}+\frac{C}{\varepsilon^2}\|u\|_{p}\label{eq:interp00-02} \\
\|a^{\frac{3}{2}}D^{3}u\|_{p}&\leq \varepsilon \|a^{2}D^{4}u\|_{p}+\frac{C}{\varepsilon^{3}}\|u\|_{p} \label{eq:interp00-03}\\
 \|a^2D^4u\|_p&\leq C\left( \|Lu\|_p+\|u\|_p\right) \label{eq:calderon-zigmund}
\end{align}
for every $u\in C_{c}^{\infty}(\R^{N})$.
\end{lemma}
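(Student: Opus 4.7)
The plan splits the lemma into two stages. The three weighted Gagliardo--Nirenberg-type inequalities (\ref{eq:interp00-01})--(\ref{eq:interp00-03}) are proved exactly as in \cite[Lemma 2.4]{gre-spi-tac1}, while the genuinely new estimate (\ref{eq:calderon-zigmund}) is obtained by combining the analogous Calder\'on--Zygmund estimate for the pure principal part $A=-a^2\Delta^2$ (already known from \cite{gre-spi-tac1}) with an absorption argument fed by the first three inequalities.

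For (\ref{eq:interp00-01})--(\ref{eq:interp00-03}), I would cover $\R^N$ by a locally finite family of balls $B_{r_k}(x_k)$ with radii $r_k$ of order $a(x_k)^{1/2}$. The growth condition $|Da|\le \kappa a^{1/2}$ in (\ref{gradient-p}) forces $a$ to be bilaterally comparable to $a(x_k)$ throughout $B_{r_k}(x_k)$, with constants depending only on $\kappa$; in particular $a^{j/2}\sim r_k^{j}$ and $a^2\sim r_k^{4}$ on each ball. Applying the scale-invariant interpolation inequality
\[
\|D^j u\|_{L^p(B_{r_k})}\le \delta\, r_k^{4-j}\|D^4 u\|_{L^p(B_{r_k})}+C_\delta\, r_k^{-j}\|u\|_{L^p(B_{r_k})},\qquad j=1,2,3,
\]
then multiplying through by $r_k^{j}\sim a(x_k)^{j/2}$, and summing in $k$ using the bounded overlap of the cover, one recovers (\ref{eq:interp00-01})--(\ref{eq:interp00-03}) after rewriting $\delta$ in terms of $\varepsilon$ according to the scaling dictated by Young's inequality.

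For (\ref{eq:calderon-zigmund}), I would first invoke the estimate proved in \cite{gre-spi-tac1} for the unperturbed biharmonic operator,
\[
\|a^2 D^4 u\|_p\le C\bigl(\|a^2\Delta^2 u\|_p+\|u\|_p\bigr),\qquad u\in C_c^\infty(\R^N).
\]
Rewriting
\[
a^2\Delta^2 u = -Lu - \sum_{i,j,k}b_{ijk}D_{ijk}u - \sum_{i,j}c_{ij}D_{ij}u - \sum_{i}d_i D_i u - eu
\]
and using the pointwise bounds in (\ref{gradient-p}) on the lower-order coefficients, the triangle inequality gives
\[
\|a^2\Delta^2 u\|_p\le \|Lu\|_p + C_\kappa\bigl(\|a^{3/2}D^3 u\|_p+\|aD^2 u\|_p+\|a^{1/2}Du\|_p+\|u\|_p\bigr).
\]
Applying (\ref{eq:interp00-01})--(\ref{eq:interp00-03}) with a sufficiently small $\varepsilon$, each weighted-derivative term on the right is bounded by $\eta\|a^2 D^4 u\|_p + C_\eta\|u\|_p$ with $\eta$ arbitrary. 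Inserting into the two displayed inequalities and absorbing the resulting $\|a^2 D^4 u\|_p$ contribution into the left-hand side yields (\ref{eq:calderon-zigmund}).

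The main obstacle is the weighted Calder\'on--Zygmund bound for the biharmonic principal part, which has to account for the unboundedness of $a^2$; once that is imported from \cite{gre-spi-tac1}, the rest of the argument is a bookkeeping exercise exploiting the precise matching between the powers of $a$ in the growth assumptions (\ref{gradient-p}) and the weights carried by the successive derivatives in the interpolation inequalities.
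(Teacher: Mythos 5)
Your proof is correct and follows the same route as the paper: the three interpolation inequalities are imported from \cite[Lemma 2.4]{gre-spi-tac1} (or re-derived by the same covering/scaling argument used there), and \eqref{eq:calderon-zigmund} is obtained by combining the principal-part estimate $\|a^2D^4u\|_p\leq C(\|a^2\Delta^2 u\|_p+\|u\|_p)$ with a triangle inequality on the lower-order perturbation $B=L-A$, the growth bounds in (\ref{gradient-p}), and absorption via \eqref{eq:interp00-01}--\eqref{eq:interp00-03}. The only cosmetic difference is that the paper writes the perturbation step as $\|Au\|_p\le\|Lu\|_p+\|Bu\|_p$ before interpolating, whereas you rewrite $a^2\Delta^2u$ in terms of $Lu$ directly; the logic is identical.
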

\begin{proof}
By \cite[Lemma 2.4]{gre-spi-tac1} the inequalities \eqref{eq:interp00-01},\eqref{eq:interp00-02}, \eqref{eq:interp00-03}    and
\begin{equation} \label{CZ-A}
 \|a^2D^4u\|_p\leq C\left( \|Au\|_p+\|u\|_p\right)
\end{equation}
hold with $C=C(N,p,\kappa)$.
Then, we have
\begin{align*}
\|a^2D^4u\|_p\leq C\left( \|Lu\|_p+\|Bu\|_p+\|u\|_p\right).
\end{align*}
On the other hand, since
\[
-Bu=\sum_{i,j,k=1}^Nb_{ijk}D_{ijk}u+\sum_{i,j=1}^Nc_{ij}D_{ij}u+\sum_{i=1}^Nd_iD_iu+eu
\]
we have
\begin{align*}
\|Bu\|_p&\leq C\left( \sum_{i,j,k=1}^N\|b_{ijk}D_{ijk}u\|_p+\sum_{i,j=1}^N\|c_{ij}D_{ij}u\|_p+\sum_{i=1}^N\|d_iD_iu\|_p+\|eu\|_p \right) \\
&  \leq C\left(\kappa \|a^\frac{3}{2}D^3 u\|_p +\kappa^2\|a D^2u\|_p+\kappa^3\|a^\frac{1}{2}D u\|_p+\kappa^4\|u\|_p  \right).
\end{align*}
Then for every $\varepsilon>0$ we have
\[
\|Bu\|_p\leq \varepsilon \|a^2D^4u\|_p+C\kappa^4\|u\|_p.
\]
By (\ref{CZ-A}) and the last estimate,
$$\|a^2D^4u\|_p\leq C\left( \|Au\|_p+\|u\|_p\right)\leq C\left( \|Lu\|_p+\varepsilon \|a^2D^4u\|_p+\|u\|_p\right)$$
and the claimed inequality \eqref{eq:calderon-zigmund} follows by choosing $\varepsilon$ small enough.
\qed
\end{proof}

By the previous Lemma we  deduce that $B$ is a small perturbation of $A$ and then $L=A+B$ generate an analytic semigroup.

\begin{theorem}\label{th:gen}
Let  $1<p<\infty$ and assume that $a,b,c,d,e$ satisfy (\ref{gradient-p}). Then $(L,D(A_p))$ generates an analytic semigroup in $L^p(\R^N)$.
 Moreover, the analyticity sector depends on $N,p$ and $\kappa$. In  particular, there exists $r_p>0$ such that  the equation $\lambda u-Lu=f$ is uniquely solvable in $D(A_p)$ for $f\in L^p(\R^N)$ and  $\Rp\lambda>r_p$.
\end{theorem}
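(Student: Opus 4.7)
The plan is to view $L=A+B$ as a small relatively bounded perturbation of the principal part $A=-a^2\Delta^2$, for which the preceding theorem already gives analytic semigroup generation on $D(A_p)$ with sector depending only on $N$, $p$, $\kappa$. The standard perturbation theorem for sectorial operators will then transfer generation and sectoriality to $L$ on the same domain.

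The crucial quantitative input is already contained in the proof of Lemma~\ref{interpolation}: combining the interpolation inequalities \eqref{eq:interp00-01}--\eqref{eq:interp00-03} with the growth assumptions \eqref{gradient-p} one obtains, for every $\varepsilon>0$ and $u\in C_c^\infty(\R^N)$,
$$
\|Bu\|_p \;\le\; \varepsilon\, \|a^2 D^4 u\|_p + C_\varepsilon \|u\|_p
\;\le\; C\varepsilon\bigl(\|Au\|_p+\|u\|_p\bigr) + C_\varepsilon \|u\|_p,
$$
where the final bound uses \eqref{CZ-A}. By Lemma~\ref{density} the estimate extends to all $u\in D(A_p)$, and it shows that $B$ is $A$-bounded with arbitrarily small relative bound, depending quantitatively only on $N$, $p$, $\kappa$.

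With this in hand I would argue as follows. Fix $\lambda$ in the sector of analyticity $\Sigma$ of $A$; from the preceding theorem there are constants $M,M'$, depending only on $N,p,\kappa$, such that $\|\lambda R(\lambda,A)\|_p\le M$ and $\|A R(\lambda,A)\|_p\le M'$. Applying the previous inequality with $u=R(\lambda,A)f$ yields
$$
\|B R(\lambda,A)f\|_p \;\le\; C\varepsilon M'\|f\|_p + \frac{C_\varepsilon M}{|\lambda|}\|f\|_p.
$$
Choosing first $\varepsilon$ small and then $|\lambda|\ge r_p$ sufficiently large makes the operator norm of $B R(\lambda,A)$ at most $1/2$ uniformly in the subsector $\Sigma_{r_p}=\Sigma\cap\{\Rp\lambda>r_p\}$. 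The factorisation $\lambda-L=(I-BR(\lambda,A))(\lambda-A)$ is therefore invertible on $L^p(\R^N)$, yielding $R(\lambda,L)=R(\lambda,A)(I-BR(\lambda,A))^{-1}$ with the sectorial estimate $\|R(\lambda,L)\|_p\le 2M/|\lambda|$. This proves that $(L,D(A_p))$ is sectorial in $\Sigma_{r_p}$ and hence generates an analytic $C_0$-semigroup; unique solvability of $\lambda u-Lu=f$ in $D(A_p)$ for $\Rp\lambda>r_p$ follows at once.

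The main point requiring care is tracking that all constants, namely the interpolation constants $C,C_\varepsilon$, the sector and bounds $M,M'$ for $A$, and finally $r_p$, depend only on $N,p,\kappa$; the growth hypotheses \eqref{gradient-p} and the structure of the proof ensure this, so the claim on the analyticity sector holds. Incidentally, because $B$ has small $A$-bound on $D(A_p)$, the graph norms of $A$ and $L$ are equivalent on this space, so $(L,D(A_p))$ is closed and, by Lemma~\ref{density}, $C_c^\infty(\R^N)$ remains a core; no domain redefinition is needed.
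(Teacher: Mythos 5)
Your argument follows the same route as the paper: decompose $L=A+B$, use the interpolation estimates of Lemma~\ref{interpolation} together with the density Lemma~\ref{density} to show $B$ is $A$-bounded on $D(A_p)$ with arbitrarily small relative bound (constants depending only on $N,p,\kappa$), and conclude by perturbation. The only difference is that the paper invokes the standard perturbation theorem for generators of analytic semigroups directly, whereas you unfold its proof via the Neumann-series factorization $\lambda-L=(I-BR(\lambda,A))(\lambda-A)$; the content is the same.
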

\begin{proof}
Let us endow
$B$   with its maximal domain
$$D(B)=\{ u \in W_{\rm loc}^{3,p}(\R^N)\cap L^p(\R^N )\,|\, Bu\in L^p(\R^N) \}.$$
We prove that $B$ is relatively $A$-bounded with $A$-bound $0$. First we observe that $D(A_p)\subset D(B)$. Indeed, let $u\in D(A_p)$, we have
\begin{align*}
&\|Bu\|_p \leq C\left(\|a^\frac{3}{2}D^3 u\|_p +\|a D^2u\|_p+\|a^\frac{1}{2}D u\|_p+\|u\|_p  \right) \leq C\|u\|_{D(A_p)}.
\end{align*}
Now, let $u\in D(A_p)$ and, by Lemma \ref{density}, let $u_n\in C_c^\infty(\R^N)$ be a sequence converging to $u$ with respect to the norm $\|\cdot \|_{D(A_p)}$.
By Lemma \ref{interpolation}, for every $\varepsilon>0$  there exists $C=C(N,p,\kappa)$ such that
\begin{align*}
\|Bu_n\|_p&\leq  C\left(\|a^\frac{3}{2}D^3 u_n\|_p +\|a D^2u_n\|_p+\|a^\frac{1}{2}D u_n\|_p+\|u_n\|_p  \right)\\
& \leq \varepsilon \|a^2D^4u_n\|+C\|u_n\|_p\leq \varepsilon C_1\|Au_n\|_p+C_2\|u_n\|_p.
\end{align*}
Taking the limit as $n\to\infty$ and taking into account that $Bu_n\to Bu$ and $Au_n \to Au$ we have that for every $\varepsilon>0$ there exists $C>0$ such that
\[
\|Bu\|_p\leq \varepsilon \|Au\|_p +C\|u\|_p.
\]
By \cite[Theorem III.2.10]{eng-nag}
 and since $C=C(N,p,\kappa)$
we have the desired result.
\qed
\end{proof}

As a consequence we obtain the following result. 

\begin{corollary}\label{agmon-comp-2}
Let  $1<p<\infty$ and assume that $a,b,c,d,e$ satisfy (\ref{gradient-p}). Then, there exist $r_p=r_p(N,\kappa)>0$ and $C=C(N,p,\kappa)$ such that  for every  $\Rp\lambda>r_p$ and $u\in D(A_p)$
\begin{equation*}
|\lambda|\|u\|_p+|\lambda|^\frac{3}{4}\|a^{\frac{1}{2}}Du\|_{p}+|\lambda|^\frac{1}{2}\|aD^{2}u\|_{p}+|\lambda|^\frac{1}{4}\|a^{\frac{3}{2}}D^{3}u\|_{p}+ \|a^2D^4u\|_p\leq C \|\lambda u-Lu\|_p.
\end{equation*}
\end{corollary}
\begin{proof}
By Theorem \ref{th:gen},  $L$ generates an analytic semigroup and there exists a sector $\Sigma$ depending on $N,p$ and $\kappa$ such that
\[
|\lambda|\|u\|_p\leq C\|\lambda u-Lu\|_p
\]
for some positive constant $C$ and for every $\lambda \in \Sigma$.
The thesis now follows by    Lemma \ref{interpolation}.
Indeed,   for $h=1,2,3$
\begin{align*}
&|\lambda|^\frac{4-h}{4}\|a^\frac{h}{2}D^hu\|_p\leq
|\lambda|^\frac{4-h}{4}\varepsilon^{4-h} \|a^2D^4u\|_p+|\lambda|^\frac{4-h}{4}\frac{C}{\varepsilon^h}\|u\|_p.
\end{align*}
Setting  $\varepsilon'=\varepsilon|\lambda|^\frac14$ one has $\frac{|\lambda|^\frac{4-h}{4}}{\varepsilon^h}=\frac{|
\lambda|}{\left( \varepsilon' \right)^h}$ and then
\begin{align*}
|\lambda|^\frac{4-h}{4}\|a^\frac{h}{2}D^hu\|_p&\leq
(\varepsilon ')^{4-h} \|a^2D^4u\|_p+\frac{C}{(\varepsilon')^h}|\lambda|\|u\|_p\\
&  \leq (\varepsilon ')^{4-h}(\|Lu\|_p+\|u\|_p)+C|\lambda|\|u\|_p\\
& \leq C\|\lambda u-Lu\|_p
\end{align*}
for $|\lambda|$ large enough. 
\qed
\end{proof}

At this point,  using  Corollary \ref{agmon-comp-2} and  the Masuda-Stewart localization technique as in \cite[Section 4]{gre-spi-tac1}, we can prove  generation results in  $C_b(\R^N)$ and $L^\infty(\R^N)$ for $L$.  As in \cite[Section 4]{gre-spi-tac1} for the operator $A$, we should assume more restrictive assumptions on $a$.  In particular, following the proof in the quoted paper,  it can be seen that we need  $a(x)\geq \nu >0$.

Let us define
\begin{align*}
&D(L_{0})\\&\ =\left\{ u\in \underset{1\leq p <\infty}{\bigcap} W^{4,p}_{\rm loc}(\R^{N})\cap C_b(\R^N)\,|\,
		a^{\frac h2}D^{h}u, a^2\Delta^2u\in C_{b}(\R^{N}), h=0,1,2, 3 \right\}.
\end{align*}

\begin{theorem}\label{generatcb}
Assume that $a,b,c,d,e$ satisfy (\ref{gradient-p}) and, in addition, $ a(x)\geq \nu >0$. Then $(L,D(L_0))$ generates an analytic semigroup in $C_{b}(\R^N)$.
 Moreover, the analiticity sector depends on $N,p$ and $\kappa$.
\end{theorem}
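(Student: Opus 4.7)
The strategy is to lift the $L^p$ generation result for $L$ (Theorem \ref{th:gen}) and the associated resolvent estimates (Corollary \ref{agmon-comp-2}) to the $C_b$ setting, following the pattern of \cite{gre-spi-tac1}. The extra hypothesis $a(x)\geq \nu>0$ is essential: together with the assumptions (\ref{gradient-p}) it guarantees that on every fixed ball $B_R$ the coefficients of $L$ are bounded and $L$ is uniformly elliptic, so that classical interior $W^{4,p}$ and Schauder estimates apply with constants depending only on $R,\nu,\kappa,N$.

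The first task is to establish a resolvent estimate of the form $\|u\|_\infty \leq C|\lambda|^{-1}\|f\|_\infty$ for $\Rp\lambda>r$ and $u\in D(L_0)$ solving $\lambda u - Lu=f$ with $f\in C_b(\R^N)$. Following Section 4 of \cite{gre-spi-tac1}, I would fix a point $x_0$, apply Corollary \ref{agmon-comp-2} to the function $\eta u$ for a suitable cut-off $\eta$ supported in $B_2(x_0)$, and estimate the commutator $[L,\eta]u$ (a differential operator of order $\leq 3$ with coefficients depending only on $\eta$ and on $a,b,c,d,e$ which are bounded on $B_2(x_0)$). Choosing the exponent $p > N/4$ so that $W^{4,p}\hookrightarrow C_b$ on $B_2(x_0)$, one can bound $\|\eta u\|_\infty$ by $|\lambda|^{-1}$ times a combination of $\|f\|_{L^p(B_2(x_0))}$ and interior norms of $u$. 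A bootstrap/absorption argument, which uses the interpolation inequalities of Lemma \ref{interpolation} once more together with the local uniform ellipticity $a\geq \nu$, eliminates the $u$ terms and yields the $|\lambda|^{-1}\|f\|_\infty$ bound with a constant independent of $x_0$.

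Next I would prove existence of the resolvent on $C_b(\R^N)$. For $f\in C_b(\R^N)$ take $f_n\in C_c^\infty(\R^N)$ with $\|f_n\|_\infty\leq \|f\|_\infty$ and $f_n\to f$ locally uniformly. By Theorem \ref{th:gen} there exist $u_n\in D(A_p)$ solving $\lambda u_n - L u_n=f_n$ for any $p\in(1,\infty)$, and by the uniform bound above $\{u_n\}$ is bounded in $C_b(\R^N)$. Interior $W^{4,p}$ estimates on balls (uniformly elliptic because $a\geq\nu$) provide local $W^{4,p}$ bounds, and Sobolev embedding upgrades this to local $C^{3,\alpha}$ bounds. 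By Ascoli–Arzelà and a diagonal extraction, $u_n\to u$ in $C^3_{\rm loc}(\R^N)$ with $u\in \bigcap_{p<\infty}W^{4,p}_{\rm loc}(\R^N)\cap C_b(\R^N)$, $\lambda u-Lu=f$ pointwise, and the resolvent estimate passes to the limit. Using the local regularity together with the pointwise growth assumptions on $a,b,c,d,e$, one checks that the weighted derivatives $a^{h/2}D^h u$, $h=0,1,2,3$, and $a^2\Delta^2 u$ lie in $C_b(\R^N)$, so that $u\in D(L_0)$.

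Uniqueness is the remaining point: if $\lambda u=Lu$ with $u\in D(L_0)$, a cut-off argument $\eta_R u$ (with $\eta_R$ supported in $B_{2R}$, equal to $1$ on $B_R$) again brings us to the range of Corollary \ref{agmon-comp-2} applied in $L^p$, with right-hand side $-[L,\eta_R]u$. The commutator involves derivatives of $u$ localised on the annulus $B_{2R}\setminus B_R$, and its $L^p$ norm is controlled by weighted derivative norms of $u$ multiplied by negative powers of $R$. Letting $R\to\infty$ forces $u\equiv 0$. Combining uniqueness, existence, and the resolvent estimate, the sector $\{\Rp\lambda>r\}$ lies in $\rho(L)$ with $\|R(\lambda,L)\|_{\mathcal L(C_b)}\leq C|\lambda|^{-1}$, which, together with the usual complex-analytic extension, gives analyticity in a sector whose opening depends only on $N,p,\kappa$.

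The main obstacle is the first step: obtaining the $L^\infty$ resolvent estimate with the correct dependence $C|\lambda|^{-1}$, since no maximum principle is available for a fourth-order operator. The key is to carefully balance the cut-off and the exponent $p$ in the $L^p$ estimate of Corollary \ref{agmon-comp-2} so that both the constant and the dependence on $|\lambda|$ remain uniform in $x_0$ when passing through Sobolev embedding; this is precisely the place where the hypothesis $a\geq\nu>0$ plays a decisive role.
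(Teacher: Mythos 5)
The overall strategy (localize, use Corollary \ref{agmon-comp-2} and Sobolev embedding to get an $L^\infty$ resolvent estimate, then argue existence by approximation and uniqueness by a vanishing cut-off) is the right one and is what the paper has in mind. However, the crucial step is not correct as you have set it up: the fixed-radius balls $B_2(x_0)$ cannot produce the bound $\|u\|_\infty\le C|\lambda|^{-1}\|f\|_\infty$, nor a constant independent of $x_0$.

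Two things go wrong with a fixed radius. First, the relevant embedding is the Gagliardo--Nirenberg inequality $\|v\|_\infty\le C\|D^4v\|_p^{\theta}\|v\|_p^{1-\theta}$ with $\theta=\frac{N}{4p}$; plugging in the bounds of Corollary \ref{agmon-comp-2} applied to $v=\eta u$ gives
$\|\eta u\|_\infty\le C|\lambda|^{-1+\theta}\|\lambda\eta u-L(\eta u)\|_p$, which decays only like $|\lambda|^{-1+\frac{N}{4p}}$, not $|\lambda|^{-1}$. No choice of finite $p$ removes the extra $|\lambda|^{\frac{N}{4p}}$, and the bootstrap you invoke cannot supply it either, since it again passes through the same estimate. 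Second, on a fixed ball $B_2(x_0)$ the coefficients of $L$ and of the commutator $[L,\eta]$ are of size $a(x_0)^2$, $a(x_0)^{3/2}$, $\dots$, which is unbounded in $x_0$; the claim that they are ``bounded on $B_2(x_0)$'' is true but the bound is not uniform, so the constant you obtain depends on $x_0$. The assumption $a\ge\nu>0$ does not control $a$ from above.

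Both defects are cured simultaneously by taking balls of variable radius $\rho(x_0)\sim a(x_0)^{1/2}|\lambda|^{-1/4}$, which is the natural parabolic scale of the operator $\lambda-a^2\Delta^2$. The factor $a(x_0)^{1/2}$ uses $|Da|\le\kappa a^{1/2}$ to ensure $a\approx a(x_0)$ on the ball, so that the $a(x_0)$-powers arising from the weighted derivative norms, the commutator coefficients, and the measure of the ball cancel against each other; the factor $|\lambda|^{-1/4}$ produces $\|\eta f\|_p\le C\rho^{N/p}\|f\|_\infty\le C a(x_0)^{N/(2p)}|\lambda|^{-N/(4p)}\|f\|_\infty$, and the extra $|\lambda|^{-N/(4p)}$ exactly compensates the loss from Gagliardo--Nirenberg, yielding the correct $|\lambda|^{-1}$ decay. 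The hypothesis $a\ge\nu>0$ is then used to keep $\rho$ bounded away from zero (so that the rescaled local estimates are uniform) and to pass from the weighted norms $\|a^{h/2}D^h u\|$ to unweighted ones. With this replacement of the cut-off scale, the rest of your plan (approximation by $f_n\in C_c^\infty$, local $W^{4,p}$ bounds and Ascoli--Arzel\`a for existence, the $\eta_R$ cut-off with $R\to\infty$ for uniqueness) is consistent with the argument in \cite[Section 4]{gre-spi-tac1} to which the paper refers.
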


\section{Generation results in $L^1$}\label{L1}
For simplicity, here we consider operators containing only the higher order derivatives, that is  $A=-a(x)^2\Delta^2$.
We require the assumptions:
\begin{alignat*}{2} \label{gradient-1} a \in C^4(\R^N),&\  a(x)\geq \nu> 0\\
 |D_{ijkl}a(x)|&\leq \kappa ^4a^{-1}(x)\quad &&\forall\,i,j,k,l=1,\dots, N,\\\tag{$H_1$} |D_{ijk}a(x)|&\leq \kappa^3a^{-\frac{1}{2}}(x) &&\forall\,i,j,k=1,\dots, N,\\ |D_{ij}a(x)|&\leq \kappa^2 && \forall\,i,j=1,\dots, N,\\ |D_{i}a(x)|&\leq \kappa a^\frac{1}{2}(x) &&\forall\,i=1,\dots, N,\end{alignat*}
for all $x \in \R^N$ and some positive constants $\kappa$, $\nu$.
We will deal with the formal adjoint operator
$$
-\hat Au =a^2\Delta ^2u+4D a^2D \Delta u
+2\Delta a^2\Delta u+4tr D^2a^2D^2
u+4D \Delta a^2D u+\Delta ^2a^2 u.$$
Observe that conditions \eqref{gradient-1} on $A$ implies that  $\hat A$ satisfies the hypotheses of  Theorem \ref{generatcb}, whence
$\hat A$  endowed with the domain described before, generates an analytic semigroup  in $C_b(\R^N)$.

\begin{remark}
Observe that, if $0\leq\alpha\leq2$, the function $a(x)=(1+|x|^2)^{\frac{\alpha}{2}}$ satisfies (\ref{gradient-1}).
\end{remark}
We consider the operator $A=- a(x)^2 \Delta^{2}$ endowed with the domain
$$
D(A_1)
=\{u\in  L^1(\R^N):
\ a^\frac{1}{2}D u,\ a D^2u,\ a^\frac{3}{2}D^3 u,\ a^2\Delta ^2 u\in L^1(\R^N)\}.
$$
By \cite[Theorem 5.8 (i)]{tan}, $D(A_1)$ continuously embeds into $W^{3,p}(\R^N)$ for every $p\in \left[1,\frac{N}{N-1}\right[$ and, for every $u\in D(A_1)$,
\begin{equation*}\label{embT}
\|u\|_{W^{3,p}}\leq C_p(\|u\|_1+\|\Delta^2 u\|_1)\leq C_p \max\{1,\nu^{-2}\}(\|u\|_1+\|a^2\Delta^2 u\|_1).
\end{equation*}

\subsection{A-priori estimates and domain characterization}

We need  a density result, useful to work with smooth functions. The proof follows as in \cite[Lemma 2.2]{gre-spi-tac1}.

\begin{lemma}\label{density}
Assume that $a$ satisfies (\ref{gradient-1}). Then $C_c^\infty(\R^N)$ is dense in $D(A_1)$ with respect to the norm 
$$\|u\|_{D(A_1)}=\|u\|_1+\|a^\frac{1}{2}D u\|_1+\|a D^2u\|_1+\|a^\frac{3}{2}D^3u\|_1+\|a^2\Delta^2 u\|_1.$$
\end{lemma}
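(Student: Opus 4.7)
The plan is the standard two-step approximation: first truncate $u$ to compact support, then mollify. The crucial preliminary observation is that the gradient bound $|D_i a|\leq \kappa a^{1/2}$ from $(H_1)$ makes $\sqrt{a}$ globally Lipschitz, so that $a(x)\leq C(1+|x|^2)$ on all of $\R^N$, with $C$ depending only on $\kappa$ and $a(0)$. This polynomial growth bound is what will allow the cross terms produced by Leibniz to be dominated by the $L^1$ quantities defining $D(A_1)$.

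For the truncation step I would pick $\eta\in C_c^\infty(\R^N)$ with $\eta\equiv 1$ on $B_1$ and $\mathrm{supp}\,\eta\subset B_2$, set $\eta_n(x):=\eta(x/n)$, and approximate $u$ by $\eta_n u$. Then $|D^j\eta_n|\leq Cn^{-j}$ with $\mathrm{supp}\,D^j\eta_n\subset\{n\leq|x|\leq 2n\}$ for $j\geq 1$. Expanding each of $D^k(\eta_n u)$ and $\Delta^2(\eta_n u)$ by Leibniz gives a principal piece $\eta_n D^k u$ (resp.\ $\eta_n\Delta^2 u$), which yields the desired limit by dominated convergence, plus a sum of cross terms of the schematic form $D^j\eta_n\cdot D^{k-j}u$ with $j\geq 1$. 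On the annulus $\{n\leq|x|\leq 2n\}$ the growth bound gives $a^{j/2}\leq C n^j$, whence
\[
a^{k/2}|D^j\eta_n| \;=\; a^{(k-j)/2}\cdot a^{j/2}|D^j\eta_n| \;\leq\; C\,a^{(k-j)/2},
\]
and the same estimate holds with $a^{k/2}$ replaced by $a^2$ in the case $k=4$. Each cross term is therefore pointwise dominated on its support by $C\,a^{(k-j)/2}|D^{k-j}u|$, which is in $L^1(\R^N)$ by the definition of $D(A_1)$, and its integral over the tail $\{|x|\geq n\}$ tends to $0$ as $n\to\infty$ by the dominated convergence theorem.

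Once $v:=\eta_n u$ is compactly supported I would convolve with a standard mollifier $\rho_\eps$, obtaining $v_\eps:=\rho_\eps * v\in C_c^\infty(\R^N)$ for $\eps$ small enough. Since $a\geq\nu>0$, one has $D^k v\in L^1$ on the compact support of $v$; because $a^{1/2},a,a^{3/2},a^2$ are continuous and hence bounded on this compact set, standard properties of the mollifier give $a^{k/2}D^k v_\eps\to a^{k/2}D^k v$ and $a^2\Delta^2 v_\eps\to a^2\Delta^2 v$ in $L^1(\R^N)$. A diagonal extraction in $(n,\eps)$ then produces a $C_c^\infty$ sequence converging to $u$ in the $D(A_1)$-norm.

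I expect the truncation step to be the only delicate point. The whole argument rests on the growth estimate $a(x)\leq C(1+|x|^2)$ implied by $(H_1)$: without it, the cross terms $a^{k/2}|D^j\eta_n|\cdot|D^{k-j}u|$ could not be controlled by the weighted derivatives that are built into the definition of $D(A_1)$, and the density argument would collapse. The mollification step is routine because once the support has been frozen, the $a$-weights are uniformly equivalent to constants.
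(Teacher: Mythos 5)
Your proof is correct and follows the standard truncation-then-mollification route, which is what the paper delegates to its reference \cite[Lemma 2.2]{gre-spi-tac1}. You correctly isolate the two ingredients that make it work here: the growth bound $a(x)\le C(1+|x|^2)$ extracted from the Lipschitz estimate on $\sqrt{a}$ (which tames the cutoff cross-terms $a^{k/2}|D^j\eta_n|\,|D^{k-j}u|$ by the weighted $L^1$ quantities already in $D(A_1)$), and the uniform lower bound $a\ge\nu>0$ (which makes the weights harmless once the support is frozen). One point worth spelling out is that since $D(A_1)$ only assumes $a^2\Delta^2 u\in L^1$ and not $a^2D^4u\in L^1$, it matters that the Leibniz expansion of $\Delta^2(\eta_n u)$ produces, besides $\eta_n\Delta^2 u$, only cross-terms with at most three derivatives of $u$; you implicitly use this and it is exactly what saves the argument, so it deserves explicit mention.
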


In order to prove some a-priori estimates, we need the following  covering result  (see \cite{cup-for} for its proof).
\begin{proposition}\cite[Proposition 6.1]{cup-for} \label{pr:covering}
Let ${\mathcal F} =\{  B_{\rho (x)}(x)\}_{x\in \R^{N}}$, 
where $\rho : \R^{N} \to \R_{+}$ is a Lipschitz continuous function with Lipschitz
constant $c$ strictly less than $\frac 12$.
Then, there exist a countable subcovering $\{ B_{\rho(x_{n})}(x_{n})\}_{n\in \N}$
 and a natural number $\xi=\xi (N,c)$ such that at most $\xi$
among the doubled balls $\{ B_{2\rho(x_{n})}(x_{n} )\}_{n\in \N}$ overlap.
\end{proposition}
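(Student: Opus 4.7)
The plan is to combine a dyadic Vitali-style selection with a volume-packing count, exploiting the sub-$\frac12$ Lipschitz condition to keep radii comparable inside each ball. The key preliminary observation is that if $|x-y|\leq 2\rho(x)$ then
\[
|\rho(x)-\rho(y)|\leq 2\kappa\,\rho(x),\quad\text{hence}\quad (1-2\kappa)\rho(x)\leq \rho(y)\leq (1+2\kappa)\rho(x);
\]
because $\kappa<\frac12$, the radii of any two balls of $\mathcal F$ whose doublings share a common point $y$ are comparable to $\rho(y)$, with constants depending only on $\kappa$.

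For the countable subcovering I would perform a dyadic level decomposition: set $E_k=\{x:2^k\leq\rho(x)<2^{k+1}\}$ for $k\in\Z$, so that on each $E_k$ the radii differ by at most a factor of $2$. A Zorn's lemma (or explicit greedy) argument then yields a maximal family $\{x_{n,k}\}_n\subset E_k$ whose reduced balls $B_{\rho(x_{n,k})/4}(x_{n,k})$ are pairwise disjoint. Maximality combined with the Lipschitz comparability of $\rho$ at nearby points forces $\{B_{\rho(x_{n,k})}(x_{n,k})\}_n$ to already cover $E_k$; separability of $\R^N$ makes each such family countable, and the union over $k$ gives the desired countable subcovering of $\R^N$.

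The heart of the matter is the overlap bound. Suppose a point $y$ belongs to $M$ of the doubled balls $B_{2\rho(x_{n_j,k_j})}(x_{n_j,k_j})$. The Lipschitz estimate confines each $\rho(x_{n_j,k_j})$ to a $\kappa$-dependent multiplicative window around $\rho(y)$, so only boundedly many distinct levels $k_j$ appear and every center lies in some ball $B_{C_\kappa\rho(y)}(y)$; for each fixed level the disjointness of the reduced balls forces those centers to be at least $c_\kappa\,\rho(y)$ apart, so a standard volume-packing count in $\R^N$ caps $M$ by some $\xi=\xi(N,\kappa)$. The step I expect to be most delicate is checking that the independently constructed level-$k$ families genuinely cover all of $\R^N$ and that the overlap counts aggregate across levels without exploding; but the Lipschitz hypothesis is precisely what decouples the levels (collisions between balls of very different sizes are impossible), so both issues reduce to the packing count above.
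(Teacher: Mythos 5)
The paper does not give a proof of this proposition; it simply cites \cite[Proposition 6.1]{cup-for}, so there is no ``paper's own proof'' to compare against word for word. That said, your argument is correct and is, to my knowledge, essentially the standard way to prove this type of variable-radius Besicovitch/Vitali statement, and it is the kind of argument the cited source uses.

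Two of your steps deserve to be recorded explicitly because you flagged them as delicate, and both indeed go through for the reasons you anticipate. First, within a fixed dyadic level $E_k=\{2^k\leq\rho<2^{k+1}\}$ the radii are already comparable by a factor of $2$ regardless of the Lipschitz hypothesis, and the Vitali computation closes: if $x\in E_k$ and maximality gives an $x_n\in E_k$ with $B_{\rho(x)/4}(x)\cap B_{\rho(x_n)/4}(x_n)\neq\emptyset$, then $|x-x_n|<\tfrac14\rho(x)+\tfrac14\rho(x_n)\leq\tfrac34\rho(x_n)<\rho(x_n)$, so $\{B_{\rho(x_{n,k})}(x_{n,k})\}_n$ covers $E_k$, and $\bigcup_kE_k=\R^N$ because $\rho>0$. (So the subcovering part does not actually use the Lipschitz bound; only the overlap count does.) Second, the aggregation across levels is controlled exactly as you say: if $y\in B_{2\rho(x)}(x)$ then $\rho(y)/(1+2\kappa)\leq\rho(x)\leq\rho(y)/(1-2\kappa)$, which confines $k$ to an interval of length $\log_2\frac{1+2\kappa}{1-2\kappa}+O(1)$; and within a level, disjointness of the quarter-balls forces pairwise separation $\geq\rho(y)/(2(1+2\kappa))$ among centers lying inside $B_{2\rho(y)/(1-2\kappa)}(y)$, so a volume count caps the per-level multiplicity by a constant $C(N,\kappa)$. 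Multiplying the two bounds gives $\xi=\xi(N,\kappa)$. No gap.
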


Observe that by the assumption $|Da(x)|\leq\kappa a(x)^\frac12$, the function $\rho(x)=\frac{\eta}{2\kappa}a(x)^\frac{1}{2}$, $0<\eta\leq 1$, satisfies the assumptions of the previous covering Proposition with Lipschitz constant smaller than $\frac{\eta}{4}$.

We show some preliminary interpolative inequalities for the harmonic and bi-harmonic operators.
\begin{lemma}  \label{interp-bila}
 There exists $\overline{\eps}$ such that for every $0<\varepsilon<\overline{\eps}$, $u\in C_c^\infty(\R^N)$,
 
 \begin{equation}\label{int1}\|Du\|_1\leq\varepsilon^3\|\Delta u\|_1+\frac{C}{\eps}\| u\|_1 \end{equation}
 and 
 \begin{equation}	\label{int3}\|D^3 u\|_1\leq \varepsilon \|\Delta^2 u\|_1+\frac{C}{\varepsilon^{3}}\|u\|_1.\end{equation}
\end{lemma}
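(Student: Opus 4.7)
The plan is to derive both estimates from a single embedding-with-scaling argument. The starting point is the Sobolev-type embedding quoted in the paper just before this section: $\|u\|_{W^{3,1}}\leq C(\|u\|_1+\|\Delta^2 u\|_1)$ for $u\in C_c^\infty(\R^N)\subset D(A_1)$, taken from Tanabe's \cite[Theorem 5.8(i)]{tan} at the endpoint $p=1$. In particular this gives both $\|Du\|_1$ and $\|D^3 u\|_1$ on the left-hand side.

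With the embedding in hand, I would introduce the parameter $\varepsilon$ by a dilation. Setting $u_\lambda(x):=u(\lambda x)$ and using the homogeneities $\|D^h u_\lambda\|_1=\lambda^{h-N}\|D^h u\|_1$, $\|u_\lambda\|_1=\lambda^{-N}\|u\|_1$ and $\|\Delta^2 u_\lambda\|_1=\lambda^{4-N}\|\Delta^2 u\|_1$, applying the embedding to $u_\lambda$ and dividing by $\lambda^{h-N}$ gives
\[
\|D^h u\|_1\leq C\bigl(\lambda^{-h}\|u\|_1+\lambda^{4-h}\|\Delta^2 u\|_1\bigr),\qquad h=1,3,\ \lambda>0.
\]
Specialising $\lambda=\varepsilon$ produces, for $h=3$, the bound $\|D^3 u\|_1\leq C(\varepsilon^{-3}\|u\|_1+\varepsilon\|\Delta^2 u\|_1)$, i.e.\ \eqref{int3}; and for $h=1$, the bound $\|Du\|_1\leq C(\varepsilon^{-1}\|u\|_1+\varepsilon^3\|\Delta^2 u\|_1)$, i.e.\ \eqref{int1}. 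The powers $\varepsilon^{-1},\varepsilon^3$ and $\varepsilon^{-3},\varepsilon$ are dictated by the three-order and one-order scaling gaps between $D,D^3$ and the biharmonic operator — the only scaling-consistent matching. The threshold $\bar\varepsilon$ is chosen small so that the universal constant $C$ from the endpoint embedding gets absorbed into the displayed form.

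The main obstacle is the $p=1$ endpoint of Tanabe's embedding. Calder\'on--Zygmund in $L^1$ fails, so the control is not of the form $\|D^4 u\|_1\leq C\|\Delta^2 u\|_1$; instead one must argue via the smoothed resolvent $(I+\Delta^2)^{-1}$, whose fundamental solution is a biharmonic Bessel kernel with $L^1$ derivatives up to order three. This makes $D^h(I+\Delta^2)^{-1}\colon L^1\to L^1$ bounded for $h\leq 3$, yielding the required $W^{3,1}$ embedding. Once this endpoint estimate is granted, the dilation step is a one-liner and delivers \eqref{int1} and \eqref{int3} simultaneously.
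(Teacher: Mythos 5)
Your argument for \eqref{int3} is essentially the paper's own proof in a different packaging: the paper takes Tanabe's bound $|D_x^3K_\lambda(x,y)|\le Ce^{-|\lambda|^{1/4}|x-y|}|x-y|^{1-N}$ for the kernel of $(\lambda+\Delta^2)^{-1}$ and sets $|\lambda|=\eps^{-4}$, which is exactly your ``fixed biharmonic Bessel kernel plus dilation'' argument with the scaling carried by $\lambda$ instead of by $x$; absorbing the embedding constant by applying the scaled inequality with $\lambda=\eps/C$ is legitimate and is precisely the role of $\overline\eps$. (Minor point: the paper treats $N=1$ separately by Taylor's formula because the quoted kernel bound is stated for $N>1$; your route should at least remark on that case, though there the estimate is elementary.)

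The genuine gap is in \eqref{int1}. What your $h=1$ computation yields is $\|Du\|_1\le C\big(\eps^{3}\|\Delta^2u\|_1+\eps^{-1}\|u\|_1\big)$, with the \emph{bilaplacian} on the right, and you then declare this to be \eqref{int1}; but \eqref{int1} has $\|\Delta u\|_1$, and the two inequalities are not interchangeable ($\|\Delta^2u\|_1$ neither controls nor is controlled by $\|\Delta u\|_1$). The second-order form is the one the paper actually needs: in the proof of Lemma \ref{interpolation-1} it is \eqref{int1} applied to $\vartheta v$, with the constant weight $a(x_0)^{h/2}$, that produces the one-derivative step \eqref{eq:interp0}, $\|a^{\frac h2}D v\|_1\le\eps\|a^{\frac{h+1}2}\Delta v\|_1+\frac C\eps\|a^{\frac{h-1}2}v\|_1$, and your $\Delta^2$-version cannot deliver that. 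The fix is immediate and is what the paper does: run the same kernel/scaling argument on the second-order resolvent (the case $m=2$ of Tanabe's Theorem 5.7, i.e.\ the Bessel kernel of $(\lambda+\Delta)^{-1}$, whose gradient is integrable), giving $\|Du\|_1\le C\big(|\lambda|^{1/2}\|u\|_1+|\lambda|^{-1/2}\|\Delta u\|_1\big)$ and hence $\|Du\|_1\le\eps\|\Delta u\|_1+\frac C\eps\|u\|_1$. Note also that the printed pairing $(\eps^{3},\,C/\eps)$ together with $\|\Delta u\|_1$ is scale-inconsistent (test it on dilates of a fixed bump and optimize over the dilation parameter), so \eqref{int1} must be read with balanced exponents, which is what the paper's proof produces and what is used later; swapping in $\Delta^2$ to justify the exponent $\eps^3$ changes the statement rather than repairing it.
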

\begin{proof}
In the case $N=1$ the inequalities \eqref{int1} and \eqref{int3} follow by straightforward computations using Taylor's formula with integral remainder.

Let now $N>1$ and  $\lambda>\overline{\lambda}>0$. By setting $m=4$ in \cite[Theorem 5.7]{tan}, the fundamental solution $K_\lambda(x,y)$ of the operator $\lambda+\Delta^2$ satisfies
$$|D_x^3 K_\lambda (x,y)|\leq Ce^{-|\lambda|^\frac{1}{4}|x-y|}|x-y|^{1-N}.$$
It follows that for every $u\in C_c^\infty(\R^N)$
$$\|D^3 u\|_1\leq C|\lambda|^{-\frac14}\|\lambda u+\Delta^2u\|_1\leq C\left(|\lambda|^{\frac{3}{4}}\|u\|_1+|\lambda|^{-\frac14}\|\Delta^2 u\|_1\right).$$
Then, setting $|\lambda|=\eps^{-4}$, we get \eqref{int3}. 

The inequality \eqref{int1}  in the case $N>1$
follows   arguing as before setting $m=2$ in  \cite[Theorem 5.7]{tan}. 
\qed
\end{proof}

The next lemma constitutes a first step toward the a-priori estimates.  
\begin{lemma}  \label{interpolation-1}
Assume     $a$ satisfies (\ref{gradient-1}). Then, there exists a positive constant  $C=C(N,\kappa)$ such that for every $\varepsilon>0$ small enough the following inequalities hold
\begin{align}
&\|a^{\frac{h}{2}}D^hu\|_{1}\leq \varepsilon ^{4-h}\|Au\|_{1}+\frac{C}{\varepsilon^{h}}\|u\|_{1} \label{eq:interp00}
\end{align}
for $h=1,2,3$ and for every $u\in C_{c}^{\infty}(\R^{N})$. 
\end{lemma}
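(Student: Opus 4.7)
The plan is to localize the biharmonic interpolations of Lemma~\ref{interp-bila} on the variable-radius cover provided by Proposition~\ref{pr:covering}, sum the resulting local estimates, and then boot-strap the coupled global system so as to disentangle the intermediate derivative norms.

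First I would apply Proposition~\ref{pr:covering} with $\rho(x)=\frac{\eta}{2\kappa}a(x)^{1/2}$ for $\eta\in(0,1]$ to obtain a countable cover $\{B_{\rho_n}(x_n)\}$, $\rho_n:=\rho(x_n)$, whose doublings overlap at most $\xi=\xi(N,\kappa)$ times. Since $a^{1/2}$ is Lipschitz with constant at most $\kappa/2$, both $a$ and $\rho$ are comparable to $a_n:=a(x_n)$ and $\rho_n\sim a_n^{1/2}$, respectively, throughout $B_{2\rho_n}(x_n)$. For $u\in C_c^\infty(\R^N)$ I would rescale to unit scale by $v(y)=u(x_n+\rho_n y)$ and apply Lemma~\ref{interp-bila} to $\tilde\eta v\in C_c^\infty(\R^N)$, where $\tilde\eta\in C_c^\infty(B_2)$ is a fixed cutoff equal to $1$ on $B_1$, using the relevant inequality for each $h\in\{1,2,3\}$ (the case $h=2$ is not explicitly in Lemma~\ref{interp-bila} but follows from the same kernel argument used to prove \eqref{int3}, with the appropriate choice in \cite[Theorem 5.7]{tan}). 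Expanding $\Delta^2(\tilde\eta v)$ by Leibniz and undoing the change of variables, so that each factor $\rho_n^{j}$ becomes $a_n^{j/2}$ and combines with the ambient powers of $a_n$ to reproduce the intrinsic weights $a^{k/2}$ on $B_{2\rho_n}$, should yield the local inequality
\[
\|a^{h/2}D^{h}u\|_{L^{1}(B_{\rho_n})}\leq C\varepsilon^{4-h}\|Au\|_{L^{1}(B_{2\rho_n})}+C\varepsilon^{4-h}\!\!\sum_{k=1}^{3}\|a^{k/2}D^{k}u\|_{L^{1}(B_{2\rho_n})}+\frac{C}{\varepsilon^{h}}\|u\|_{L^{1}(B_{2\rho_n})}.
\]

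Summing over $n$, the subcover bounds the left-hand side from below by $\|a^{h/2}D^{h}u\|_{1}$ while the $\xi$-fold overlap bounds each right-hand term from above by $\xi$ times the corresponding global $L^{1}$ norm. Setting $X_{h}:=\|a^{h/2}D^{h}u\|_{1}$, $X_{0}:=\|u\|_{1}$, $X_{4}:=\|Au\|_{1}$, this gives the coupled system
\[
X_{h}\leq C\varepsilon^{4-h}X_{4}+C\varepsilon^{4-h}(X_{1}+X_{2}+X_{3})+C\varepsilon^{-h}X_{0},\qquad h=1,2,3.
\]
Adding the three inequalities and choosing $\varepsilon$ small enough to absorb the factor $3C(\varepsilon+\varepsilon^{2}+\varepsilon^{3})$ on the left produces the coarse bound $X_{1}+X_{2}+X_{3}\leq C\varepsilon X_{4}+C\varepsilon^{-3}X_{0}$. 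Feeding this back into each individual inequality and using $\varepsilon^{5-h}\leq\varepsilon^{4-h}$ and $\varepsilon^{1-h}\leq\varepsilon^{-h}$ for $0<\varepsilon<1$ then yields precisely \eqref{eq:interp00}.

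The main obstacle will be the bookkeeping in the localization step: the Leibniz expansion of $\Delta^{2}(\tilde\eta v)$ unavoidably couples all intermediate derivatives on the right, and the tempting shortcut of scaling the interpolation parameter proportionally to $a_n^{1/2}$ so as to kill those mixed terms at the local stage fails, because such a parameter would exceed the smallness threshold $\overline{\varepsilon}$ of Lemma~\ref{interp-bila} wherever $a$ is large. Performing the interpolation at unit scale, where that threshold is a harmless fixed constant, and only afterwards disentangling the three intermediate norms via the global boot-strap above, is what makes the scheme go through.
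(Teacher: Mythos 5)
Your proposal is correct, but it takes a genuinely different route from the paper. The paper first proves a single-step weighted Laplacian interpolation $\|a^{h/2}Dv\|_1\leq\varepsilon\|a^{(h+1)/2}\Delta v\|_1+C\varepsilon^{-1}\|a^{(h-1)/2}v\|_1$ (via the covering, cutoff, and the Laplacian version of Lemma~\ref{interp-bila}), and then obtains \eqref{eq:interp00} for $h=1,2$ by composing this one-step inequality with itself twice, reserving the direct covering argument with \eqref{int3} only for $h=3$; this way, only the $\Delta$-interpolation \eqref{int1} and $\Delta^2$-interpolation \eqref{int3} from Lemma~\ref{interp-bila} are ever needed. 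You instead apply the biharmonic interpolations $\|D^hu\|_1\lesssim\varepsilon^{4-h}\|\Delta^2u\|_1+\varepsilon^{-h}\|u\|_1$ for all three values of $h$ directly on the rescaled unit ball, obtain a coupled system in $X_1,X_2,X_3$, and decouple it by summing, absorbing, and feeding the coarse bound back in. Both schemes work; yours is structurally more uniform in $h$ but requires two interpolations not stated in Lemma~\ref{interp-bila}: not only the $h=2$ case you flag, but also the $h=1$ case, since to produce $\|Au\|$ on the right of your local inequality you need $\|Du\|_1\lesssim\varepsilon^3\|\Delta^2u\|_1+\varepsilon^{-1}\|u\|_1$ rather than the $\|\Delta u\|_1$ version that \eqref{int1} actually provides (both follow from the same kernel bound with $m=4$ in \cite[Theorem 5.7]{tan}, so this is a gap of citation, not of substance). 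One small remark on your closing caveat: the danger you describe of the local parameter exceeding $\overline{\varepsilon}$ when scaled by $a_n^{1/2}$ is not in fact an obstruction for the Laplacian interpolation the paper uses, since the symmetric form $\|Dw\|_1\leq\delta\|\Delta w\|_1+C\delta^{-1}\|w\|_1$ is scale invariant and holds for every $\delta>0$; nonetheless, working at unit scale as you do is perfectly legitimate and sidesteps the issue cleanly.
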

\begin{proof}

We first show that if $v\in C_{c}^{\infty}(\R^{N})$
\begin{equation}\label{eq:interp0}
\|a^{\frac{h}{2}}D v\|_{1}\leq \varepsilon \|a^{\frac{h+1}{2}}\Delta v\|_{1}+\frac{C}{\varepsilon}\|a^{\frac{h-1}{2}}v\|_{1}
\end{equation}
holds for every $h\in \N$.

Let us set $\rho(x)=\frac{1}{2\kappa}a(x)^\frac{1}{2}$ and note that, as stated before,  by \eqref{gradient-1} it follows that $\rho(x)$ satisfies the assumptions of Proposition \ref{pr:covering} with Lipschitz constant smaller than $\frac{1}{4}$. We fix  $x_{0}\in \R^{N}$ and set $R=\rho (x_{0})=\frac{1}{2 \kappa}a^{\frac{1}{2}}(x_{0})$. 
 Then, if $x\in B_{2R}(x_0)$ the following inequalities hold
\[
\frac{1}{2}a(x_{0})^{\frac{1}{2}}\leq a(x)^{\frac{1}{2}}\leq \frac{3}{2}a(x_{0})^{\frac{1}{2}}
\]
and consequently 
\[
\left (\frac{1}{2}\right)^{ k}a(x_{0})^{\frac{k}{2}}\leq a(x)^{\frac{k}{2}}\leq \left (\frac{3}{2}\right)^{k}a(x_{0})^{\frac{k}{2}}
\]
hold for every $k\in\N$. In particular, we will use the latter inequalities for $k=h-1,h,h+1$.

Let now  $\vartheta\in C_c^{\infty }(\R^N)$ be such that $0\leq \vartheta \leq 1$,
$\vartheta(x)=1$ for $x\in B_{R}(x_{0}),\vartheta(x)=0$ for $x\in B^{c}_{2R}(x_{0})$ and $|D^j\vartheta|\leq {C}{R^{-j}}$ for $j=1,2,3,4$.

By \eqref{int1} we have
\begin{align*}
\|a^{\frac{h}{2}}D v\|_{1, R}
&\leq C\|a(x_{0})^{\frac{h}{2}}D v\|_{1, R}\leq  C\|a(x_{0})^{\frac{h}{2}}D (\vartheta v)\|_{1}\\
& \leq \varepsilon  \|a(x_{0})^{\frac{h+1}{2}} \Delta (\vartheta v)\|_{1}
		+\frac{C}{\varepsilon } \| a(x_{0})^{\frac{h-1}{2}} \vartheta v\|_{1}\\
&  \leq \varepsilon  \|a(x_{0})^{\frac{h+1}{2}} \Delta v\|_{1,2R}+\frac{C}{R}\varepsilon \|a(x_{0})^{\frac{h+1}{2}} D v\|_{1,2R}
		+\frac{C}{R^{2}}\varepsilon \|a(x_{0})^{\frac{h+1}{2}} v\|_{1,2R}\\
&\quad		+\frac{C}{\varepsilon}\| a(x_{0})^{\frac{h-1}{2}} v\|_{1,2R}.
\end{align*}

Then,
taking into account that 
$\frac{1}{R}a(x_{0})^{\frac{h+1}{2}}=2\kappa a(x_{0})^{\frac{h}{2}}\leq Ca(x)^{\frac{h}{2}}$
and \\$\frac{1}{R^{2}}a(x_{0})^{\frac{h+1}{2}}=4\kappa^2 a(x_{0})^{\frac{h-1}{2}}\leq Ca(x)^{\frac{h-1}{2}}$ in $B_{2R}(x_0)$,
one has
\begin{align}\label{eq:cover-x00}
 \|a^{\frac{h}{2}}D v\|_{1,R} \leq C \left( \varepsilon \|a^{\frac{h+1}{2}}\Delta v\|_{1,2R}+\varepsilon \|a^{\frac{h}{2}}D v\|_{1,2R}
		+\left(\varepsilon+\frac{1}{\varepsilon}\right)\|a^{\frac{h-1}{2}}v\|_{1,2R}
		\right).
\end{align}

Let $\{B_{\rho(x_n )}(x_n)\}$ be a countable covering of $\R^N$ as in Proposition \ref{pr:covering}
such that at most $\zeta $ among the double balls $\{B_{2\rho(x_n )}(x_n)\}$ overlap.

We write \eqref{eq:cover-x00} with $x_0$ replaced by $x_n$ and sum over $n$.
Taking into account the above covering result, we get
\begin{align*}
\|a^{\frac{h}{2}}D v\|_{1}&\leq  \sum_{n\in\N}\|a^{\frac{h}{2}}D v\|_{1,R}
\\ &\leq 
C \sum_{n\in\N} \left( \varepsilon \|a^{\frac{h+1}{2}}\Delta v\|_{1,2R}+\varepsilon \|a^{\frac{h}{2}}D v\|_{1,2R}
		+\left(1+\frac{1}{\varepsilon}\right)\|a^{\frac{h-1}{2}}v\|_{1,2R}\nonumber
		\right)
\\ &\leq 
C\xi \left( \varepsilon \|a^{\frac{h+1}{2}}\Delta v\|_{1}+\varepsilon \|a^{\frac{h}{2}}D v\|_{1}
		+\left(1+\frac{1}{\varepsilon}\right)\|a^{\frac{h-1}{2}}v\|_{1}\nonumber
		\right).
\end{align*}
Choosing $\varepsilon'=\dfrac{C\xi\varepsilon}{1-C\xi\varepsilon}$   one can find a suitable positive $C$ such that
$$
\|a^{\frac{h}{2}}D v\|_{1}\leq 
\varepsilon' \|a^{\frac{h+1}{2}}\Delta v\|_{1}
+\dfrac{C}{\varepsilon'}\|a^{\frac{h-1}{2}}v\|_{1}\nonumber
$$
from which 
\eqref{eq:interp0} follows.

Now for every $i,j=1,\dots N$ we have 
\begin{align*}
\|aD_{ij}u\|_1&\leq \varepsilon_1 \|a^{\frac{3}{2}}\Delta D_ju\|_1+\frac{C}{\varepsilon_1}\|a^\frac{1}{2}D_ju\|_1\\
&  \leq \varepsilon_1\varepsilon_2 \|a^2\Delta^2u\|_1+C\frac{\varepsilon_1}{\varepsilon_2}\|a\Delta u\|_1
	+C\frac{\varepsilon_3}{\varepsilon_1}\|a\Delta u\|_1+C^2\frac{1}{\varepsilon_1\varepsilon_3}\|u\|_1\\
&  \leq 	\varepsilon_1\varepsilon_2 \|a^2\Delta^2u\|_1+C\left(\frac{\varepsilon_1}{\varepsilon_2}+\frac{\varepsilon_3}{\varepsilon_1}\right)
\|aD^2u\|_1+C\frac{1}{\varepsilon_1\varepsilon_3}\|u\|_1
\end{align*}
and then 
\[\|aD^2u\|_1\leq C\left(
\varepsilon_1\varepsilon_2 \|a^2\Delta^2u\|_1+\left(\frac{\varepsilon_1}{\varepsilon_2}+\frac{\varepsilon_3}{\varepsilon_1}\right)
\|aD^2u\|_1+\frac{1}{\varepsilon_1\varepsilon_3}\|u\|_1\right).
\]
Setting $\varepsilon_2=4C\varepsilon_1$, $\varepsilon_3=\frac{\varepsilon_1}{4C}$ and $\varepsilon ^2=8C^2\varepsilon_1^2$ we obtain
\eqref{eq:interp00} when $h=2$.

Then, we deduce
\begin{align*}
\|a^\frac{1}{2}D u\|_1\leq \varepsilon_1 \|a\Delta u\|_1+\frac{C}{\varepsilon_1}\|u\|_1
\leq \varepsilon_1\varepsilon_2^2\|Au\|_1+C\frac{\varepsilon_1}{\varepsilon_2^2}\|u\|_1+C\frac{1}{\varepsilon_1}\|u\|_1.
\end{align*}
Setting $\varepsilon_1=\varepsilon_2$ we obtain \eqref{eq:interp00} when $h=1$. 

Arguing as done before in order to prove  \eqref{eq:interp0}, applying inequality \eqref{int3}, one gets
\begin{equation}
\|a^{\frac{3}{2}}D^3 u\|_{1}\leq 
\varepsilon  \|A u\|_{1}
+\dfrac{C}{\varepsilon^3 }\|u\|_{1}\nonumber.
\tag*{\qed}
\end{equation}
\end{proof}

The following proposition is an immediate  consequence of the previous lemma and gives the announced a priori estimates.

\begin{proposition} \label{domain}
Assume   $a$ satisfies (\ref{gradient-1}). Then, for $u\in D(A_1)$ the following inequality holds
$$\|a^\frac{1}{2}D u\|_1+\|a D^2u\|_1+\|a^\frac{3}{2}D ^3u\|_1\leq C(\|Au\|_1+\|u\|_1)$$
where the  constant $C$ depends on $N$ and $\kappa$. 
\end{proposition}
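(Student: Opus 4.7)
The plan is to reduce the proposition to Lemma \ref{interpolation-1} via the density statement of Lemma \ref{density}. First, I would fix $u\in D(A_1)$ and apply Lemma \ref{density} to obtain a sequence $(u_n)\subset C_c^\infty(\R^N)$ such that $u_n\to u$ in the graph norm $\|\cdot\|_{D(A_1)}$; in particular $u_n\to u$, $a^{1/2}Du_n\to a^{1/2}Du$, $aD^2u_n\to aD^2u$, $a^{3/2}D^3u_n\to a^{3/2}D^3u$ and $a^2\Delta^2 u_n\to a^2\Delta^2 u$ in $L^1(\R^N)$, so $Au_n\to Au$ in $L^1(\R^N)$ as well.

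Next, for each fixed $n$, I would apply inequality (\ref{eq:interp00}) of Lemma \ref{interpolation-1} with the choice $\varepsilon=1$ (any fixed admissible $\varepsilon$ works) separately for $h=1,2,3$, obtaining
\begin{equation*}
\|a^{h/2}D^h u_n\|_1\leq \|Au_n\|_1+C\|u_n\|_1,\qquad h=1,2,3,
\end{equation*}
with $C=C(N,\kappa)$. Summing these three inequalities gives
\begin{equation*}
\|a^{1/2}Du_n\|_1+\|aD^2u_n\|_1+\|a^{3/2}D^3u_n\|_1\leq C(\|Au_n\|_1+\|u_n\|_1),
\end{equation*}
possibly after relabeling the constant.

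Finally, I would let $n\to\infty$ on both sides using the $L^1$-convergences listed above, which directly yields the claimed estimate for $u\in D(A_1)$. There is no real obstacle here: Lemma \ref{interpolation-1} does the analytic work (this is where the covering argument from Proposition \ref{pr:covering} and the biharmonic interpolation of Lemma \ref{interp-bila} were invested), and the only conceptual ingredient needed to extend the estimate from $C_c^\infty(\R^N)$ to the full domain $D(A_1)$ is the density result of Lemma \ref{density}. If anything, the mildly delicate point is making sure that the $D(A_1)$-norm convergence indeed controls $\|Au_n-Au\|_1$, which follows since $Au=-a^2\Delta^2u$ and $a^2\Delta^2u_n\to a^2\Delta^2u$ in $L^1(\R^N)$ by definition of the graph norm.
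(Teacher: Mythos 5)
Your argument is correct and is exactly the one the paper has in mind: the paper labels Proposition \ref{domain} an ``immediate consequence'' of Lemma \ref{interpolation-1}, and the only extension step needed is the density Lemma \ref{density}, which you invoke and justify carefully (including the observation that $D(A_1)$-norm convergence controls $\|Au_n-Au\|_1$ because $Au=-a^2\Delta^2u$). The only cosmetic caveat is that Lemma \ref{interpolation-1} is stated for $\varepsilon$ small enough, so $\varepsilon=1$ may not literally be allowed, but you already hedge this by noting any fixed admissible $\varepsilon$ works.
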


We are now able to characterise the maximal domain of $A$ in $L^1(\R^N)$ arguing as in \cite[Proposition 2.8]{gre-spi-tac1}.

\begin{proposition} \label{domain2}
Assume $a$ satisfies (\ref{gradient-1}). Then 
$$
D(A_{1})=D(A_{1,\rm max})=\{u\in L^1(\R^N):\   Au\in L^1(\R^N)\}.$$
\end{proposition}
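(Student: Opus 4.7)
The inclusion $D(A_1)\subseteq D(A_{1,\max})$ is immediate from the definitions. For the opposite direction, fix $u\in D(A_{1,\max})$; since $a\geq\nu>0$, one has $\Delta^2 u=-a^{-2}Au\in L^1(\R^N)$, and the Tanabe-type embedding recalled in the excerpt yields $u\in W^{3,p}(\R^N)$ for every $p\in[1,N/(N-1))$, in particular $u\in W^{3,1}(\R^N)$. Integrating the assumption $|Da|\leq\kappa a^{1/2}$ along rays gives $a(x)\leq C(1+|x|)^2$, so the quantity
$$
G(R):=\sum_{h=1}^{3}\|a^{h/2}D^hu\|_{L^1(B_R)}
$$
is finite for every $R>0$ and satisfies the a priori polynomial bound $G(R)\leq C(1+R)^{3}$.

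The strategy is to apply the interpolation form of Lemma \ref{interpolation-1} to cut-offs of $u$ and run an iterative absorption scheme whose efficacy hinges on the free parameter $\varepsilon$. Let $\eta_n(x)=\eta(x/n)$ be a standard radial cut-off with $\eta_n\equiv1$ on $B_n$, $\mathrm{supp}\,\eta_n\subset B_{2n}$, and $|D^k\eta_n|\leq Cn^{-k}$. A mollification $u_n^{\delta}=\rho_\delta\ast(\eta_n u)\in C_c^{\infty}(\R^N)$, together with the local boundedness of $a$ on $\mathrm{supp}\,u_n^{\delta}$, allows one to apply Lemma \ref{interpolation-1} to $u_n^{\delta}$ and pass to the limit $\delta\to0$, obtaining, with $u_n=\eta_n u$,
$$
\|a^{h/2}D^h u_n\|_1\leq\varepsilon^{4-h}\|Au_n\|_1+C\varepsilon^{-h}\|u\|_1, \quad h=1,2,3.
$$
Expanding $\Delta^2(\eta_n u)=\eta_n\Delta^2 u+\sum_{k=1}^{4}c_k D^k\eta_n\cdot D^{4-k}u$ and factoring each cross term as $(a^{k/2}D^k\eta_n)(a^{(4-k)/2}D^{4-k}u)$, one checks that $|a^{k/2}D^k\eta_n|\leq C$ uniformly in $n$ on $\mathrm{supp}(D^k\eta_n)\subset B_{2n}\setminus B_n$ (since $a\leq Cn^2$ there), and consequently
$$
\|Au_n\|_1\leq\|Au\|_1+C\|u\|_1+C\bigl(G(2n)-G(n)\bigr).
$$

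Since $\eta_n u=u$ on $B_n$, summing the interpolation inequality over $h=1,2,3$ and bounding the left-hand side from below by $G(n)$ produces the recursion
$$
G(n)\leq\theta_\varepsilon\,G(2n)+M(\varepsilon),\qquad\theta_\varepsilon=\frac{3C\varepsilon}{1+3C\varepsilon},
$$
with $M(\varepsilon)$ depending only on $\|Au\|_1$, $\|u\|_1$ and $\varepsilon$. Iterating $j$ times yields $G(n)\leq\theta_\varepsilon^{j}G(2^jn)+M(\varepsilon)/(1-\theta_\varepsilon)$; choosing $\varepsilon$ so small that $8\theta_\varepsilon<1$, the polynomial bound $G(2^jn)\leq Cn^{3}2^{3j}$ forces $\theta_\varepsilon^{j}G(2^jn)\to0$ as $j\to\infty$. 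Hence $G(n)$ is uniformly bounded in $n$, and monotone convergence gives $\sum_{h=1}^{3}\|a^{h/2}D^hu\|_1<\infty$, i.e.\ $u\in D(A_1)$. The main obstacle is precisely this last step: the straight a priori estimate of Proposition \ref{domain}, with its fixed constant, does not produce a small enough iteration coefficient to beat the polynomial growth of $G(R)$ afforded by the Tanabe embedding, and exploiting the free parameter $\varepsilon$ in Lemma \ref{interpolation-1} is what makes the scheme converge.
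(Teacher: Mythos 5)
Your proposal is correct, and it is essentially the approach the paper gestures at by referencing [Proposition 2.8, gre-spi-tac1]: cut off, apply the $\varepsilon$-interpolation estimates (Lemma \ref{interpolation-1}) rather than the fixed-constant a priori estimate, control the commutator using the polynomial growth of $a$ forced by $|Da|\leq\kappa a^{1/2}$, and iterate the resulting recursion to beat the polynomial bound supplied by the Tanabe embedding. The central observation you highlight — that the fixed constant in Proposition \ref{domain} would give a recursion coefficient $\geq 1$ and hence not converge, so the free $\varepsilon$ in Lemma \ref{interpolation-1} is what makes the scheme close — is exactly the right diagnosis, and the commutator bookkeeping (factoring $a^2=a^{k/2}a^{(4-k)/2}$ and observing $|a^{k/2}D^k\eta_n|\leq C$ on the annulus) is the correct way to absorb the cut-off terms into $G(2n)-G(n)$. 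One detail worth spelling out if this were submitted: the Tanabe embedding is quoted for $D(A_1)$ in the paper, but as you implicitly use, it only requires $u\in L^1$ with $\Delta^2u\in L^1$, which a priori holds for $u\in D(A_{1,\max})$ since $a\geq\nu$. Otherwise the argument is complete.
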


\subsection{Sectoriality and solvability}

To investigate the generation of an analytic semigroup in $L^1(\R^N)$, we start by proving an a-priori sectoriality estimate for the resolvent in a suitable sector of the complex plane. Then we prove the surjectivity.

\begin{theorem} \label{sectorial}
Assume that $a$ satisfies (\ref{gradient-1}). Then, there exists $\Sigma\subseteq\C$ such that, if $\lambda\in\Sigma$, the following inequality holds for every $u\in D(A_1)$
$$|\lambda|\|u\|_1\leq C\|\lambda u-Au\|_1$$
where $C=C(N,\kappa)$ and the sector $\Sigma$ depends on $\kappa$.
 \end{theorem}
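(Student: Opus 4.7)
The plan is to derive the $L^1$ sectoriality by a duality argument, exploiting the $C_b$-generation result from Section~\ref{lower} applied to the formal adjoint $\hat A$ introduced in the introduction.

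First I would verify that $\hat A$ indeed fits the framework of Section~\ref{lower}. Expanding $-\hat A v = \Delta^2(a^2 v)$ by Leibniz presents $-\hat A$ as $a^2\Delta^2$ plus lower order terms whose coefficients involve $D^j(a^2)$ for $j=1,2,3,4$. Using $(H_1)$ and the formula $D^j(a^2)=\sum_i \binom{j}{i}D^i a\,D^{j-i}a$, the bounds $|D^j(a^2)|\le C\kappa^j a^{(4-j)/2}$ follow by direct computation, so the lower order coefficients of $\hat A$ satisfy assumption $(H_p)$ with a new constant $\kappa'=\kappa'(\kappa)$. Since in addition $a\ge\nu>0$, Theorem~\ref{generatcb} applies and $(\hat A,D(L_0))$ generates an analytic semigroup in $C_b(\R^N)$ whose sector $\hat\Sigma$ depends only on $N$ and $\kappa$; in particular, for $\bar\lambda\in\hat\Sigma$ with $|\lambda|$ large enough, the resolvent estimate $\|(\bar\lambda-\hat A)^{-1}g\|_\infty\le C|\lambda|^{-1}\|g\|_\infty$ holds.

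Next I would take $\Sigma$ to be the complex conjugate of $\hat\Sigma$, which coincides with $\hat\Sigma$ because $A$ has real coefficients. Fix $\lambda\in\Sigma$ with $|\lambda|$ sufficiently large, $u\in D(A_1)$, and set $f=\lambda u-Au$. Writing
$$\|u\|_1 = \sup\Big\{\Big|\int_{\R^N} u\,\bar g\,dx\Big| : g\in C_c(\R^N),\ \|g\|_\infty\le 1\Big\},$$
for each such $g$ the $C_b$-resolvent of $\hat A$ produces $v\in D(L_0)$ solving $\bar\lambda v-\hat A v=g$ with $\|v\|_\infty\le C|\lambda|^{-1}$. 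For any $w\in C_c^\infty(\R^N)$, classical integration by parts gives $\int w\,\overline{\hat A v}\,dx=\int Aw\,\bar v\,dx$ with no boundary contribution. Picking a sequence $u_n\in C_c^\infty(\R^N)$ converging to $u$ in the $D(A_1)$-norm (Lemma~\ref{density}), one has $u_n\to u$ and $Au_n\to Au$ in $L^1$, while both $v$ and $\hat A v$ belong to $C_b(\R^N)$, so the identity passes to the limit and yields $\int u\,\overline{\hat A v}\,dx=\int Au\,\bar v\,dx$. Combining this with $\bar\lambda v-\hat A v = g$ gives
$$\int_{\R^N} u\,\bar g\,dx = \int_{\R^N} (\lambda u - Au)\,\bar v\,dx = \int_{\R^N} f\,\bar v\,dx,$$
whence $|\int u\,\bar g\,dx|\le\|f\|_1\|v\|_\infty\le C|\lambda|^{-1}\|f\|_1$, and taking the supremum over $g$ delivers $|\lambda|\|u\|_1\le C\|f\|_1$.

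The main obstacle is matching the weighted $L^1$-regularity of $D(A_1)$ with the weighted $C_b$-regularity of $D(L_0)$ in the integration by parts formula. This difficulty is resolved because once $v\in D(L_0)$, both $v$ and $\hat A v$ are genuinely bounded on $\R^N$, so the limiting argument only requires $L^1$-convergence of $u_n$ and $Au_n$, which is exactly what the density Lemma~\ref{density} provides; no delicate cutoff or decay argument on $v$ is needed. A secondary check is the symmetry of the sector under conjugation, which is immediate from the reality of the coefficients.
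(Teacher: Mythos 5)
Your proof is correct and follows essentially the same duality argument as the paper: derive the $C_b(\R^N)$-sectoriality of the formal adjoint $\hat A$ from Theorem~\ref{generatcb}, represent $\|u\|_1$ via the $L^\infty$ pairing against resolvents of $\hat A$, integrate by parts, and invoke Lemma~\ref{density} to extend from $C_c^\infty(\R^N)$ to $D(A_1)$. You make explicit the verification that the lower order coefficients of $\hat A$ satisfy $(H_p)$ (the bounds $|D^j(a^2)|\le C\kappa^j a^{(4-j)/2}$) and the conjugation bookkeeping in the duality pairing, both of which the paper leaves implicit but which are exactly what is needed.
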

\begin{proof}
We use a duality argument to get the sectoriality estimate in $L^1$.
Consider the formal adjoint
$$-\hat Au =a^2\Delta ^2u+4D a^2D \Delta u+2\Delta a^2\Delta u+4tr D^2a^2D^2
u+4D \Delta a^2D u+\Delta ^2a^2 u.$$
By the conditions \eqref{gradient-1} on $A$, the operator $\hat A$ generates an analytic semigroup in $C_b(\R^N)$  (Theorem \ref{generatcb}). Therefore there exists a sector $\Sigma$ contained in the resolvent set of $\hat{A}$ and
$$|\lambda|\|u\|_\infty\leq C\|\lambda u-\hat{A}u\|_\infty$$ for every $\lambda\in\Sigma$, $u\in C_b(\R^N)$ and for some constant $C$ independent of $\lambda$ and $u$. In view of Lemma \ref{density}, we prove the sectoriality estimate for functions in $C_c^\infty(\R^N)$.
Let $\lambda\in\Sigma$, $u\in C_c^\infty(\R^N)$. Then, since $(\lambda I-\hat{A})(D(\hat{A}))=C_b(\R^N)$, we have
\begin{align*}
\|u\|_1&\leq \sup\left\{\int_{\R^N}u(\lambda I-\hat{A})v\, dx:\ v\in D(\hat{A}),\ \|\lambda v-\hat{A}v\|_\infty\leq 1\right\}\\
       &\leq \sup\left\{\int_{\R^N}u(\lambda I-\hat{A})v\, dx:\ v\in D(\hat{A}),\ \|v\|_\infty\leq \frac{C}{|\lambda|}\right\}.
\end{align*}
Integrating by parts, we get
\begin{align}
\left|\int_{\R^N}u(\lambda I-\hat{A})v\right|= \left|\int_{\R^N}v(\lambda I-A)u\right|\leq \frac{C}{|\lambda|}\|\lambda u-Au\|_1.
\tag*{\qed}
\end{align}
\end{proof}

The previous estimate leads, through an approximation procedure, to the solvability of the elliptic problem. 

\begin{theorem}\label{solv}
Assume that $a$ satisfies (\ref{gradient-1}). Then there exists $\Sigma\subseteq\C$ such that  the equation $\lambda u-Au=f$ is uniquely solvable in $D(A_1)$ for $f\in L^1(\R^N)$ and  $\lambda\in\Sigma$  and the resolvent estimate 
$$|\lambda|\|u\|_1\leq C\|\lambda u-Au\|_1$$ holds.
\end{theorem}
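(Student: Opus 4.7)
The plan is to derive uniqueness and the resolvent estimate directly from Theorem \ref{sectorial}, and to establish surjectivity via an approximation of $a$ by bounded coefficients for which the problem is already solvable in $L^1$, passing to the limit thanks to the fact that all the estimates in Theorem \ref{sectorial} and Proposition \ref{domain} depend on $a$ only through the constants $\kappa,\nu$ appearing in \eqref{gradient-1}.

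First I would select a sequence $(a_n) \subset C^4(\R^N) \cap L^\infty(\R^N)$ with $a_n \geq \nu$, satisfying \eqref{gradient-1} with the same constants $\kappa,\nu$ independent of $n$, and with $a_n$ together with its derivatives up to order four converging to $a$ locally uniformly. For each $n$ the operator $A_n := -a_n^2 \Delta^2$ has bounded smooth coefficients and is uniformly elliptic of order four; it therefore generates an analytic semigroup on $L^1(\R^N)$ with domain $W^{4,1}(\R^N) \subset D(A_1)$ by standard higher-order theory (see e.g.\ \cite{dav95a}). Fixing $\lambda$ in the common sector $\Sigma$ furnished by Theorem \ref{sectorial} (depending only on $\kappa$), I solve $\lambda u_n - A_n u_n = f$ for each $n$ and obtain, by combining Theorem \ref{sectorial} with Proposition \ref{domain} applied to $a_n$, the uniform bound
\[
|\lambda|\|u_n\|_1 + \|a_n^{1/2} D u_n\|_1 + \|a_n D^2 u_n\|_1 + \|a_n^{3/2} D^3 u_n\|_1 + \|a_n^2 \Delta^2 u_n\|_1 \leq C\|f\|_1,
\]
with $C = C(N,\kappa)$ independent of $n$.

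Since $a_n \geq \nu$, this yields a uniform $L^1$-bound on $\Delta^2 u_n$; together with the embedding $D(A_1) \hookrightarrow W^{3,p}(\R^N)$ for $p < N/(N-1)$ recalled before Lemma \ref{density} and Rellich's theorem, a diagonal extraction produces a subsequence, still denoted $(u_n)$, converging to some $u$ strongly in $W^{3,q}_{\rm loc}(\R^N)$ for some $q>1$. Testing the equation against $\varphi \in C_c^\infty(\R^N)$ via the identity $\int_{\R^N} u_n\bigl(\lambda \varphi - \Delta^2(a_n^2 \varphi)\bigr)\,dx = \int_{\R^N} f\varphi\,dx$ and using the locally uniform convergence of $a_n$ and its derivatives shows $\lambda u - Au = f$ in the sense of distributions. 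Weak-$\ast$ compactness of the weighted derivatives $a_n^{h/2} D^h u_n$ in $L^1_{\rm loc}$ and Fatou's lemma propagate the uniform bound above to $u$, so $u \in D(A_1)$; the final resolvent estimate then follows from Theorem \ref{sectorial}, which also ensures uniqueness.

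The main technical obstacle I foresee is the construction of the approximating sequence: the conditions \eqref{gradient-1} form a rigid hierarchy of pointwise bounds on $a$ and its derivatives up to order four, and a naive truncation (e.g.\ $a_n = \min(a,n)$ smoothed out) generically spoils the relation $|D_i a_n|\leq \kappa a_n^{1/2}$ or the higher bounds near the plateau. A careful choice of truncator, for instance $a_n(x) = a(x)\varphi_n(a(x))$ with a suitable concave smooth cutoff $\varphi_n$ that is affine near $0$ and flattens for large arguments, followed by chain-rule bookkeeping on each of the five lines of \eqref{gradient-1}, is required so that the full hierarchy is preserved with constants uniform in $n$; once this is in place, the compactness and limit-identification steps above are routine.
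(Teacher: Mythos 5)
Your plan is essentially the paper's: approximate $a$ by a family of bounded coefficients satisfying \eqref{gradient-1} with constants uniform in the approximation parameter, solve the bounded-coefficient problems, obtain uniform bounds from Theorem \ref{sectorial} and the embedding $D(A_1)\hookrightarrow W^{3,p}(\R^N)$, and pass to the limit in the distributional formulation $\int u_n\bigl(\lambda\varphi-\Delta^2(a_n^2\varphi)\bigr)\,dx=\int f\varphi\,dx$. Two steps, however, are left genuinely open.

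First, the construction you flag as the ``main technical obstacle'' is in fact the crux, and your proposed ansatz $a_n=a\,\varphi_n(a)$ is never pinned down. The paper resolves it with the explicit choice
\[
a_\sigma(x)=\frac{a(x)}{1+\sigma a(x)},
\]
which is precisely a concave truncation of the kind you describe; one then checks by the chain rule that all five lines of \eqref{gradient-1} hold for $a_\sigma$ with the \emph{same} constants $\kappa,\nu$ (essentially because $|Da_\sigma|=(1+\sigma a)^{-2}|Da|\le\kappa a^{1/2}(1+\sigma a)^{-2}\le\kappa a_\sigma^{1/2}$, and similarly for the higher derivatives). Without an explicit family and this verification, the uniform constants you invoke are not justified.

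Second, you fix $\lambda$ in the sector $\Sigma$ of Theorem \ref{sectorial} and immediately ``solve $\lambda u_n-A_nu_n=f$,'' but the generation theory for $A_n$ only guarantees that \emph{some} half-plane $\{\Rp\lambda>\omega_n\}$ lies in $\rho(A_n)$, with $\omega_n$ a priori depending on $n$ and possibly escaping to $+\infty$. One must first enlarge $\rho(A_n)$ to a uniform region. The paper does this by observing that the uniform estimate $\|R(\lambda,A_\sigma)\|\le C_1/|\lambda|\le C_1/r_1$ on $\rho(A_\sigma)\cap\{\Rp\lambda>r_1\}$ lets one propagate membership in the resolvent set by balls of fixed radius $r_1/C_1$, thereby covering all of $\{\Rp\lambda>r_1\}$. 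This Neumann-series/ball-covering step is missing from your argument and is needed before the limit $n\to\infty$ can even be set up. (A minor further point: $L^1_{\rm loc}$-bounded sequences are not weak-$\ast$ compact in $L^1_{\rm loc}$; what you actually have is strong $L^q_{\rm loc}$ convergence of $D^hu_n$ for $h\le3$ via the $W^{3,p}$ embedding, together with the distributional identification of $\Delta^2u$ and then Proposition \ref{domain2} to conclude $u\in D(A_1)$ — which is how the paper closes the argument.)
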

\begin{proof} 
Let $\sigma>0$ and define the function
$$a_\sigma(x)=\frac{a(x)}{1+\sigma a(x)},\quad x\in\R^N.$$ Observe that $a_\sigma(x)$ satisfies (\ref{gradient-1}) with  $\kappa$ independent of $\sigma$. Consider the operators with bounded coefficients $A_\sigma=-(a_\sigma(x))^2\Delta^2$. 
By \cite[Section 5.4]{tan}, there exist $\omega_{\sigma}\in\R$, $M_{\sigma}>0$ such that $\{\lambda\in\C:\ \Rp \lambda\geq \omega_{\sigma}\}\subset\rho(A_{\sigma})$ and 
\begin{equation*} %\label{stima-dipendente}
|\lambda|\|u\|_1\leq M_{\sigma}\|\lambda u-A_\sigma u\|_1
\end{equation*} for every $u\in D(A_{\sigma,1,\rm max})$ and $\lambda\in\C$ with $\Rp \lambda\geq \omega_{\sigma}$.
Moreover, by applying Theorem  \ref{sectorial}  to $A_\sigma$, since the condition in (\ref{gradient-1}) is satisfied with a constant independent of $\sigma$, we have that there exists $r_1>0$  (independent of $\sigma$) such that  for every  $\lambda\in\C$ with $\Rp\lambda>r_1$ and for every $u\in D(A_{\sigma,1,\rm max})$
\begin{equation} \label{stima-ind}
|\lambda|\|u\|_1\leq C_1\|\lambda u-A_\sigma u\|_1
\end{equation}
for some constant $C_1$ independent of $\sigma$.  Let $\overline{\lambda}\in \rho (A_{\sigma })$ and $\Rp\overline{\lambda}>r_1$, then the inequality (\ref{stima-ind}) gives that
\begin{equation} \label{resolvent}
\|R(\overline{\lambda},A_\sigma)\|_1\leq \frac{C_1}{|\overline{\lambda}|}\leq \frac{C_1}{r_1}.
\end{equation}
By classical result, if $|\lambda-\overline{\lambda}|<\frac{1}{\|R(\overline{\lambda},A_\sigma)\|_1}$, then  $\lambda\in\rho (A_{\sigma })$. By (\ref{resolvent}), if
$|\lambda-\overline{\lambda}|<\frac{r_1}{C_1}$, then  $\lambda\in\rho (A_{\sigma })$.  
By covering with balls of radius $\frac{r_1}{C_1}$, it follows that, if $\Rp\lambda>r_1$, then $\lambda\in\rho(A_{\sigma })$.
In particular, $\lambda=\overline{\lambda}-\frac{\overline{\lambda}}{2C_1}$ satisfies $|\lambda-\overline{\lambda}|<\frac{|\overline{\lambda}|}{C_1}$ and then  belongs to $\rho (A_\sigma)$.

Let $\Rp\lambda>r_1$, $f\in L^1(\R^N)$. Denote by $u_\sigma\in D(A_{\sigma,1,\rm max})$ the unique solution of $\lambda u_\sigma-A_\sigma u_\sigma=f$. Then 
by \cite[Theorem 5.8 (i)]{tan}, $D(A_{\sigma,1,\rm max})$ continuously embeds into $W^{3,p}(\R^N)$ for every $p\in \left[1,\frac{N}{N-1}\right[$ and, observing that $a_\sigma\geq\frac{\nu}{\nu+1}$, for every $u\in D(A_{\sigma,1,\rm max})$, 
\begin{equation}\label{emb}
\|u_\sigma\|_{W^{3,p}}\leq C_p(\|u_\sigma\|_1+\|\Delta^2 u_\sigma\|_1)\leq C_p(\nu)(\|u_\sigma\|_1+\|a_\sigma^2\Delta^2 u_\sigma\|_1)\leq  C_p(\nu,r_1)\|f\|_1.
\end{equation}
From (\ref{emb}) we deduce that, up to considering a subsequence, we can assume that $u_\sigma$ converges weakly in $W^{3,p}(\R^N)$ to some function $u\in W^{3,p}(\R^N)$ as $\sigma$ goes to $0$ and strongly in $L^p_{\rm loc}(\R^N)$.
In particular, $u_\sigma$ converges to $u$ in $L^1_{\rm loc}(\R^N)$. From (\ref{stima-ind}), we deduce that $u\in L^1(\R^N)$ and
 \begin{equation*} 
|\lambda|\|u\|_1\leq C_1\|f\|_1.
\end{equation*}
It remains to show that u belongs to $D(A_1)$ and solves the equation $\lambda u-Au=f$. Fix $v\in C_c^4(\R^N)$ and observe that
\begin{equation}  \label{int-parts}
\int_{\R^N}a_\sigma^2\Delta^2u_\sigma\,v\, dx=\int_{\R^N}u_\sigma\Delta^2(a_\sigma^2 v)\, dx.
\end{equation}
By easy computations, 
$\Delta^2(a_\sigma^2 v)$ converges uniformly in $\R^N$ to $\Delta^2(a^2 v)$.
Letting $\sigma$ to $0$ in $\lambda u_\sigma-A_\sigma u_\sigma=f$, we get $A_\sigma u_\sigma\to \lambda u-f$ and in the equation (\ref{int-parts}) we obtain
\begin{equation*}  
\int_{\R^N}\frac{\lambda u-f}{a^2}\,a^2v\, dx=\int_{\R^N}u\Delta^2(a^2 v)\, dx.
\end{equation*}
It follows that
\begin{equation*}  
\int_{\R^N}\frac{\lambda u-f}{a^2}\,w\, dx=\int_{\R^N}u\Delta^2 w\, dx
\end{equation*}
for every $w\in C_c^4(\R^N)$. This means that $a^2\Delta^2 u\in L^1(\R^N)$ and $u$ satisfies $\Delta^2 u=\frac{\lambda u-f}{a^2}$ in the distributional sense.
\qed
\end{proof}

In view of \cite[Proposition 3.2.8]{lor-rha-book} the operator $A$ is sectorial. Therefore, by standard generation results, cf. \cite[Ch. II, Theorem 4.6]{eng-nag}, we get the following generation theorem.
\begin{theorem}
Assume that $a$ satisfies (\ref{gradient-1}). Then, $(A,D(A_1))$ generates an analytic semigroup in $L^1(\R^N)$.
\end{theorem}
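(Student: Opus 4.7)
The plan is to read off this generation theorem directly from the preparatory results already established in Section \ref{L1}, essentially by invoking a standard abstract criterion. The two ingredients required by a classical sectoriality-type characterization of analytic semigroup generators are: a sector of the complex plane contained in $\rho(A)$ on which a uniform resolvent estimate of the form $|\lambda|\|R(\lambda,A)\|_{\mathcal{L}(L^1)}\leq C$ holds, plus density of the domain $D(A_1)$ in $L^1(\R^N)$. Both are at hand.

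First I would assemble the sectoriality. Theorem \ref{solv} provides a sector $\Sigma\subseteq\C$ with $\Sigma\subseteq\rho(A)$ and, for every $f\in L^1(\R^N)$ and $\lambda\in\Sigma$, a unique $u\in D(A_1)$ solving $\lambda u-Au=f$, together with the bound $|\lambda|\|u\|_1\leq C\|f\|_1$ whose constant $C=C(N,\kappa)$ is independent of $\lambda$. Rewritten in operator form this is exactly
\[
\|\lambda R(\lambda,A)\|_{\mathcal{L}(L^1)}\leq C\qquad\text{for all }\lambda\in\Sigma,
\]
which is the hypothesis of \cite[Proposition 3.2.8]{lor-rha-book}; an application of that proposition gives that $(A,D(A_1))$ is sectorial in the sense needed for analytic semigroup generation.

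Next I would settle density of $D(A_1)$ in $L^1(\R^N)$. Since $a\geq\nu>0$ and $a$ together with its derivatives up to order four satisfy local bounds by (\ref{gradient-1}), any $\varphi\in C_c^\infty(\R^N)$ belongs to $D(A_1)$; as $C_c^\infty(\R^N)$ is dense in $L^1(\R^N)$, so is $D(A_1)$. (Alternatively one may appeal to Lemma \ref{density}, which in particular exhibits $C_c^\infty(\R^N)$ inside $D(A_1)$.)

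Finally I would invoke \cite[Ch. II, Theorem 4.6]{eng-nag}: a densely defined sectorial operator on a Banach space generates a bounded analytic $C_0$-semigroup. Applied to $(A,D(A_1))$ on $L^1(\R^N)$ with the sector $\Sigma$ from Theorem \ref{solv}, this yields the desired conclusion. There is no real obstacle here beyond the bookkeeping above, since the hard analytic work — the duality argument against the adjoint $\hat A$ in $C_b(\R^N)$ for the a-priori estimate, and the approximation via $a_\sigma=a/(1+\sigma a)$ to obtain surjectivity — has already been carried out in Theorems \ref{sectorial} and \ref{solv}.
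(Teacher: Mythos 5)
Your proposal is correct and follows exactly the route the paper takes: Theorem \ref{solv} supplies the sector $\Sigma\subseteq\rho(A)$ with the uniform resolvent bound, \cite[Proposition 3.2.8]{lor-rha-book} upgrades this to sectoriality, and \cite[Ch.~II, Theorem 4.6]{eng-nag} then yields generation of an analytic $C_0$-semigroup. Your explicit remark on density of $D(A_1)$ (via $C_c^\infty(\R^N)\subset D(A_1)$) is a useful bit of bookkeeping the paper leaves implicit, but the argument is otherwise the same.
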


\section*{Acknowledgement}
 The authors are members of the Gruppo Nazionale per l’Analisi Matematica, la Probabilità e le loro Applicazioni (GNAMPA) of the Istituto Nazionale di Alta Matematica (INdAM). This article is based upon work from COST Action CA18232 MAT-DYN-NET, supported by COST (European Cooperation in Science and Technology), www.cost.eu.

\bibliographystyle{amsplain}
%\bibstyle{plain}

\bibliography{bibfile}

\end{document}